\subjclass[2010]{Primary \textsc{\MakeLowercase{20F55, 43A35, 46L07}}, Secondary \textsc{\MakeLowercase{05A15, 43A46}}}
\keywords{Coxeter group, positive definite functions, operator spaces, Sidon sets, Khinchine inequality, length function, de Finetti theorem}
\def\@settitle{\baselineskip14\p@\relax
    \flushleft{\Large\bfseries \@title}
}
\def\@setauthors{%
  \begingroup
  \def\thanks{\protect\thanks@warning}%
  \trivlist
  \centering\footnotesize \@topsep30\p@\relax
  \advance\@topsep by -\baselineskip
  \item\relax
  \author@andify\authors
  \def\\{\protect\linebreak}%
  \flushleft{\normalfont by \authors\\}\relax\vskip2\baselineskip%
  \centerline{\itshape Uffe Haagerup (1949--2015) in Memoriam}%
  \ifx\@empty\contribs
  \else
    ,\penalty-3 \space \@setcontribs
    \@closetoccontribs
  \fi
  \endtrivlist
  \endgroup
  \let\shortauthors\authors
}
\def\maketitle{\par
  \@topnum\z@ 
  \@setcopyright
  \thispagestyle{firstpage}
  \ifx\@empty\shortauthors \let\shortauthors\shorttitle
  \else \andify\shortauthors
  \fi
  \@maketitle@hook
  \begingroup
  \@maketitle
  \author@andify\authors
  \toks@\@xp{\authors}\@temptokena\@xp{\shorttitle}%
  \toks4{\def\\{ \ignorespaces}}
  \edef\@tempa{%
    \@nx\markboth{\the\toks4
      \the\toks@}{\the\@temptokena}}%
  \@tempa
  \endgroup
  \c@footnote\z@
  \@cleartopmattertags
}
\def\ps@headings{\ps@empty
  \def\@evenhead{%
    \setTrue{runhead}%
    \normalfont\scriptsize
    \llap{\thepage\hskip1em}
    \def\thanks{\protect\thanks@warning}%
    \leftmark{}{}\hfil}%
  \def\@oddhead{%
    \setTrue{runhead}%
    \normalfont\scriptsize \hfil
    \def\thanks{\protect\thanks@warning}%
    \rightmark{}{}\rlap{\hskip1em\thepage}}%
  \let\@mkboth\markboth
}
\def\section{\@startsection{section}{1}{\z@}%
    {-21dd plus-4pt minus-4pt}{10.5dd plus 4pt
     minus4pt}{\normalsize\bfseries\boldmath}}
\def\ParFact{0.4}\fi
\def\@setparskip{\parskip=\ParFact\baselineskip \advance\parskip by 0pt plus 2pt}
\setlist[enumerate]{labelindent=--.5in,leftmargin=0pt}
\setlist[itemize]{labelindent=--.5in,leftmargin=0pt}
\definecolor{green}{RGB}{0,127,0}
\definecolor{red}{RGB}{191,0,0}
\theoremstyle{plain}
\newtheorem{theorem}{Theorem}[section]
\newtheorem{corollary}[theorem]{Corollary}
\newtheorem{lemma}[theorem]{Lemma}
\newtheorem{proposition}[theorem]{Proposition}
\theoremstyle{definition}
\newtheorem{remark}[theorem]{Remark}
\newtheorem{question}[theorem]{Question}
\newtheorem{problem}[theorem]{Problem}
\numberwithin{equation}{section}
\newcommand{\B}[1]{\mathbf{#1}}
\newcommand{\C}[1]{\mathcal{#1}}
\newcommand{\F}[1]{\mathfrak{#1}}
\begin{document}

\title[Positive definite functions on Coxeter groups\dots]{Positive definite functions on Coxeter groups
with applications to operator spaces and noncommutative
probability}
\author{Marek {\sc Bo\.zejko}}
\address{M.~Bo{\.z}ejko --- \normalfont Polska Akademia Nauk, ul.~Kopernika 18, 50-001 Wroc{\l}aw}
\email{bozejko@math.uni.wroc.pl}
\author{\'Swiatos{\l}aw R.~{\sc Gal}}
\address{\'S.~R.~Gal --- \normalfont Uniwersytet  Wroc{\l}awski, pl.~Grunwaldzki 2/4, 48-300 Wroc{\l}aw}
\email{sgal@math.uni.wroc.pl}
\author{Wojciech {\sc M{\l}otkowski}}
\address{W.~M{\l}otkowski --- \normalfont Uniwersytet  Wroc{\l}awski, pl.~Grunwaldzki 2/4, 48-300 Wroc{\l}aw}
\email{mlotkow@math.uni.wroc.pl}

\begin{abstract}
A new class of positive definite functions
related to colour-length function on arbitrary Coxeter group is introduced.
Extensions of positive definite functions, called the Riesz-Coxeter product,
from the Riesz product on the Rademacher (Abelian Coxeter) group to arbitrary
Coxeter group is obtained. Applications to harmonic analysis, operator spaces
and noncommutative probability is presented. Characterization of radial and
colour-radial functions on dihedral groups and infinite permutation group are shown.
\end{abstract}

\maketitle

\def\cb{{\mathop{\rm cb}}}
\def\tr{\mathop{\rm tr}}

\section*{Introduction}

In 1979 Uffe Haagerup in his seminal paper \cite{MR520930}, essentially proved
the positive definitness, for $0\leq q\leq 1$, of the function  $P_q(x) =
q^{|x|} = \exp(-t|x|)$, where $|\cdot|$ is the word lenght on a free Coxeter
group $W=\B Z/2*\dots *\B Z/2$. From this he deduced also Khinchine
type inequalities. He has shown that the regular $C^*$-algebra of $W$ has
bounded approximation property and later \cite{MR784292} the completely bounded approximation
property \textsc{\MakeLowercase{(CBAP)}}. These results of Uffe Haagerup have had
significant impact on harmonic analysis on free groups and, more generally, on
Coxeter groups; they also influenced free probability theory and other
noncommutative probability theories.

In the paper \cite{MR950825} it was shown that the function $P_{q} (x) = q^{|x|}$
is positive definite for $q\in[-1,1]$ and  all Coxeter groups,
where the length $|\cdot|$ is the natural word length function on a Coxeter group
with repect to the set of its Coxeter generators. This fact implies that infinite
Coxeter groups have the Haagerup property and do not have Kazhdan's
propery (T).

Later, Januszkiewicz \cite{MR1925487} and Fendler \cite{Fendler} showed,
in the spirit of Haagerup proof, that $w\mapsto z^{|w|}$
is a coefficient of a uniformely bounded Hilbert representation of $W$
for all $z\in\B C$ such that $|z|< 1$.  As shown in a very short
paper of Valette \cite{MR1172955}, this implies \textsc{\MakeLowercase{CBAP}}.
See the book \cite{MR2391387} for
futher extension of Uffe Haagerup results for a big class of
groups.

In the paper \cite{MR1388006} Bo{\.z}ejko and Speicher considered the free
product (convolution) of classic normal distribution $N(0,1)$ and the new
length function on the permutation group $\F S_n$ (i.e. the Coxeter group of
type \textsc{\MakeLowercase{A}}) was introduced, which we shall call the \textit{colour-length function}
$\|\cdot\|$. It is defined as follows: for $w\in \F S_n$ in the minimal
(reduced) representations $w = s_1\dots s_k$, where each $s_{j}$ belong to the set $S$
of transposions of the form  $(i,i+1)$, we put $\|w\| = \#\{s_{1}, s_{2},\dots, s_{k}\}$.

For our study one of  the most important results
of this paper is that the function
called Riesz-Coxeter product $R_{\B q}$ defined on all Coxeter groups
$(W,S)$ as
$$
R_{\B q}(s) = q_{s} \text{, for $s\in S$, and\ }
R_{\B q}(xy) = R_{\B q}(x) R_{\B q} (y) \text{, if\ } \| x y \| = \| x \| + \|y\|
$$
is positive definite for $0\leq q_s\leq 1$.

This implies, in particular, that in an arbitrary Coxeter group the set
of its Coxeter generators is a weak Sidon
set and also it is completely bounded $\Lambda^\cb_p$-set, see
Theorems \ref{T:5.1} and
\ref{T:6.1}.  Equivalently, the span of the linear operators $\{\lambda(s)|s\in
S\}$ in the noncommutative $L^{p}$-space $L^p(W)$ is completely boundedly isomorphic
to row and column operator Hilbert space (see Theorem \ref{T:6.1}).


Another interesting connection between the two length functions $|\cdot|$ and
$\|\cdot\|$ appeared in \cite{MR1388006} in the formula for the moments of
free additive convolution power of the Bernoulli law $\mu_{-1} =
(\delta_{-1} + \delta_1)/2$ (cf.~Corollary~6 in cited paper):
$$
m_{2n}\left(\mu_{-1}^{\boxplus q}\right)
=q^n\smashoperator{\sum_{\pi\in\C P_2(2n)}}(-1)^{|\pi|}q^{-\|\pi\|},
$$
for $q\in\B N$.  (See also Section 9 of the present paper.)

Also, in \cite{MR2764902} the colour-length function on the permutation group
$\F S_n$ was studied. Some of its extensions to pairpartitions appeared
in the presentation of the proof that classical normal law $N(0,1)$ is free
infinitely divisible under free additive convolution $\boxplus$.

Since we have recent extensions of the free probability (which is related to
type \textsc{\MakeLowercase{A}} Coxeter groups) to the free probability of type \textsc{\MakeLowercase{B}} Coxeter groups
(see \cite{MR3373433}), it seems to be
interesting to determine the role of the colour-length functions for the Coxeter
groups of type \textsc{\MakeLowercase{B}} and \textsc{\MakeLowercase{D}}.

The plan of the paper is as follows.

In Section 1 we recall definitions of Coxeter groups and of the length and the
colour-length functions.

In Section 2 we recall the definition of positive definite funcios and discuss
various classes of those, namely radial, colour-radial, and colour-dependant.

In Section 3 we discuss Abelian Coxeter groups.

In Section 4 we show the following formula characterizing the radial normalised
positive definite functions on these Coxeter groups which contain the infinite
Rademacher group $\bigoplus_{i=1}^\infty \B Z/2$
as a parabolic subgroup (these include the infinite
permutation group $\F S_\infty$): every radial positive definite function
$\varphi$ is of the form
$$
\varphi (w) =\int_{-1}^1 q^{|w|}\,\mu(dq)
$$
for a probability measure $\mu$.

That characterisation is a variation on the classical de Finetti theorem.
A noncommutative version was shown by K\"ostler ans Speicher \cite{MR2530168}
(see also \cite{MR2092722}).

We also show in Theorem \ref{T:frac}, that the function $\exp(-t|w|^p)$
is positive definite for all $t\geq 0$ if and only if $p\in[0,1]$.

In Section 5 we give a short proof of the equivalence of the two known results
concerning positive definite functions on finite Coxeter groups.

In Section 6 we present the main properties of the colour-dependent positive definite
functions on Coxeter groups, in particular we show in Proposition 4.4. that on
$\F S_\infty$ and some other Coxeter groups, the function $w\mapsto r^{\|w\|}$
is positive definite if and only if $r\in[0,1]$.

The Section 7 gives characterization of all colour-length functions on finite and
infinite dihedral groups $\B D_m$, for $m=1,2,\dots,\infty$.

In Section 8 we prove that the set $S$ of Coxeter generators is a weak Sidon
set in arbitrary Coxeter groups $(W,S)$ with constant 2 and that it is also a
completely bounded $\Lambda (p)$ set with contants as $C \sqrt p$, for $p>2$.

In Section 9 we prove for arbitrary finitely generated Coxeter group an identity
involving both lengths $|\cdot|$ and $\|\cdot\|$ (see Proposition \ref{P:tokyo}).
We apply it to give a proof of Corollary 7 from \cite{MR1388006},
(see Equation (\ref{eq:p2-nc2})) where the proof, involving probabilistic considerations,
was not presented in \cite{MR1388006}.

\section{Coxeter groups}

In this part we recall the basic facts regarding Coxeter groups and introduce notation
which will be used throughout the rest of the paper. For more details we refer to \cite{MR0240238,MR1066460}.

A group $W$ is called a \emph{Coxeter group} if it admits the following presentation:
\[
W=\left\langle S\left|\left\{(s_1 s_2)^{m(s_1,s_2)}=1:s_1,s_2\in S,m(s_1,s_2)\ne\infty\right\}\right.\right\rangle,
\]
where $S\subset W$ is a set and $m$ is a function $m:S\times S\to\{1,2,3,\ldots,\infty\}$
such that $m(s_1,s_2)=m(s_2,s_1)$ for all $s_1,s_2\in S$ and $m(s_1,s_2)=1$ if and only if $s_1=s_2$.
The pair $(W,S)$ is called a \emph{Coxeter system}.
In particular, every generator $s\in S$ has order two
and every element $w\in W$ can be represented as
\begin{equation}\label{areducedrepr}
w=s_1 s_2\ldots s_{m}
\end{equation}
for some $s_1,s_2,\ldots,s_m\in S$.
If the sequence $(s_1,\ldots,s_m)\in S^{m}$ is chosen in such a way that $m$ is minimal
then we write $|w|=m$ and call it the \emph{length} of $w$.
In such a case the right hand side of (\ref{areducedrepr}) is called a \emph{reduced representation}
or \emph{reduced word} of $w$.
This is not unique in general, but the set of involved generators is unique \cite[Ch.~\textsc{\MakeLowercase{iv}}, \S1, Prop.~7]{MR0240238}, i.e.
if $w=s_1 s_2\ldots s_m=t_1 t_2\ldots t_m$ are two reduced representations
of $w\in W$ then $\{s_1,s_2,\ldots,s_m\}=\{t_1,t_2,\ldots,t_m\}$.
This set $\{s_1,s_2,\ldots,s_m\}\subseteq S$ will be denoted $S_w$ and called the \emph{colour} of $w$.

Given a subset $T\subset S$ by $W_T$ we denote the subgroup generated by $T$ and call it the \emph{parabolic
subgroup} associated with $T$.
To see that $ S_w$ is independent of the reduced representation of $w$ notice that
\begin{equation}
\label{def:colour}
s\in S_w \iff w\not\in W_{S\smallsetminus\{s\}}.
\end{equation}
We define the \emph{colour-length} of $w$ putting $\|w\|=\# S_w$ (the cardinality of $S_w$).
Both lengths satisfy the triangle inequality and we have $\|w\|\le |w|$.

In the case of the permutation group the colour-length has the following pictorial
interpretation. If $\sigma$ is a permutation
in $\F S_{n+1}$ then $\|\sigma\|$ equals $n$ minus the number
of connected components of the diagram representing~$\sigma$.
Notice, that $|\sigma|$ equals to the number of crossings in the diagram
(the number of pairs of chords that cross).

\begin{tikzpicture}
\newcommand{\perm}[1]{\draw \foreach \x/\y in {#1} {(\x*.75,1)--(\y*.75,0)};}
\newcommand{\xtext}[1]{\draw (1.5,.5) node[fill=white] {#1};}
\matrix[column sep=0.8cm,row sep=0.5cm]{
\xtext {$\sigma$};&\xtext {$e$};&\xtext {$(12)$};&\xtext {$(12)(23)$};&\xtext {$(12)(23)(12)$};\\
&\perm{1/1,2/2,3/3}&\perm{1/2,2/1,3/3}&\perm{1/2,2/3,3/1}&\perm{1/3,2/2,3/1}\\
\xtext {$|\sigma|$};&\xtext {0};&\xtext {1};&\xtext {2};&\xtext {3};\\
\xtext {$\|\sigma\|$};&\xtext {0};&\xtext {1};&\xtext {2};&\xtext {2};\\};
\end{tikzpicture}

%
%

It would be convenient to define 
\begin{equation}
\label{def:block-length}
\|w\|_s\colon=
\begin{cases}
0&\text{if $s\not\in S_w$,}\\
1&\text{if $s\in S_w$,}
\end{cases}
\end{equation}
then, clearly, $\|w\|=\sum_{s\in S}\|w\|_s$.

\def\Rad{\mathop{\rm Rad}\nolimits}

\section{Positive defined functions}

A complex function $\varphi$ on a group $\Gamma$ is called
\emph{positive definite}
if we have
\[
\sum_{x,y\in \Gamma}\varphi(y^{-1}x)\alpha(x)\overline{\alpha(y)}\ge0
\]
for every finitely supported function $\alpha\colon\Gamma\to\B C$.

A positive definite $\varphi$ function is Hermitian and satisfies $|\varphi(x)|\le\varphi(e)$
for all $x\in \Gamma$. Usually it is assumed, that $\varphi$ is \emph{normalised}, i.e. that $\varphi(e)=1$.

In this and the following sections we discuss the \textit{radial functions} on Coxeter groups.
These are functions which depend on $|w|$ rather then on $w$.

We call a function $\varphi$ on $(W,S)$ \emph{colour-dependent} if
$\varphi(w)$ depends only on $S_w$.  We call it \emph{colour-radial}
if it depends only on $\|w\|$.

An Abelian Coxeter group generated by $S$ is isomorphic to the direct product $\oplus_{s\in S}\B Z/2$.
On these groups the lengths $|\cdot|$ and $\|\cdot\|$ coincide and all functions are colour dependent.

The main example of a positive definite function will be the \emph{Riesz--Coxeter function}.  Given a sequence $\B q=(q_s)_{s\in S}$
we define $R_{\B q}(w)=\prod_{s\in S}q_s^{\|w\|_s}=\prod_{s\in  S_w}q_s$.
We will abuse notation and denote by $R_{\B q}$ also the associated operator
$\sum_{w\in W}R_{\B q}(w)w$.  That is
$$
R_{\B q}=1+\sum_{s\in S}q_ss+\sum_{w: S_w=\{s_1,s_2\}}q_{s_1}q_{s_2}w+\sum_{w: S_w=\{s_1,s_2,s_3\}}q_{s_1}q_{s_2}q_{s_3}w+\dots
$$

In the case all $q_s=q$ we get $R_{\B q}=\sum q^{\|w\|}w$.

This generalises the classical case of Rademacher--Walsh functions in the
Rademacher group $\Rad_n$.  If we denote the generator of the $i$-th factor $\B Z/2$
of the latter by the symbol $r_i$ then, by definition, $r_1^2=1$ and $r_ir_j=r_jr_i$ and
$$
R_{\B q}=\prod\limits_{i=1}^n(1+q_ir_i).
$$

\section{Rademacher groups}


In this section we are going to study positive definite radial functions on the Abelian Coxeter groups,
$(W,S)=\Rad_{S}$.
Since positive definiteness is tested on functions with finite support, we can assume that $S$ is countable.
If $\#S=n$ we will write $\mathrm{Rad}_{n}$ instead of $\mathrm{Rad}_{S}$.
Given $n\in\B N\cup\{\infty\}$, we denote by $P_n$ the class of all functions $f\colon\{0,1,\dots,n\}\to\B R$ for $n$ finite and $f\colon\B N\to\B R$ if
$n=\infty$ such that
$\varphi(w)=f(|w|)$ is a normalised positive definite on $\Rad_n$.

The following observation is straightforward.

\begin{proposition}
Assume that $1\leq m< n\leq\infty$ and $f\in P_n$.  Then the restriction of $f$ to $\{0,\dots,m\}$
belongs to $P_m$.  A fuction $f$ belogs to $P_\infty$ if and only if all its restrictions to $\{0,\dots,m\}$
for any $m\in\B N$ belong to $P_m$.
\end{proposition}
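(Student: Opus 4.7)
The plan is to use the fact that $\Rad_m$ sits inside $\Rad_n$ as a parabolic subgroup, together with the standard observation that restriction of a positive definite function to a subgroup is again positive definite.

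For the first assertion, fix $m<n\leq\infty$ and choose any $m$-element subset $T=\{s_1,\ldots,s_m\}\subset S$. The parabolic subgroup $\Rad_T=W_T$ is isomorphic to $\Rad_m$, and because $\Rad_n$ is Abelian, the length of any element $w\in \Rad_T$ computed inside $\Rad_n$ equals its length computed inside $\Rad_T$ (both equal the colour-length, which in turn equals $\#S_w\subseteq T$). Consequently $\varphi|_{\Rad_T}$ is the function $w\mapsto f(|w|)$ on $\Rad_T$. Since $\varphi$ is positive definite on $\Rad_n$ and $\Rad_T$ is a subgroup, the restriction is positive definite on $\Rad_T\cong \Rad_m$, so $f|_{\{0,\ldots,m\}}\in P_m$.

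For the second assertion, one direction is already covered by the first part. For the converse, suppose $f|_{\{0,\ldots,m\}}\in P_m$ for every $m\in\B N$, and let $\alpha\colon \Rad_\infty\to\B C$ be any finitely supported function. Since $\mathop{\rm supp}\alpha$ is finite, there is a finite subset $T\subset S$ such that every element of $\mathop{\rm supp}\alpha$ lies in $\Rad_T$; take $m=\#T$. Then
\[
\sum_{x,y\in\Rad_\infty}\varphi(y^{-1}x)\alpha(x)\overline{\alpha(y)}
=\sum_{x,y\in\Rad_T}f(|y^{-1}x|)\alpha(x)\overline{\alpha(y)}\ge 0,
\]
the equality because all non-zero contributions come from $\Rad_T$ and the length agrees on this parabolic subgroup, the inequality because $f|_{\{0,\ldots,m\}}\in P_m$. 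Therefore $f\in P_\infty$.

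There is no real obstacle here; the only point that merits a moment of attention is the first part, namely checking that the length function does not change when restricting to a parabolic subgroup. In the Abelian case this is immediate from $|w|=\|w\|=\#S_w$, so the proof is essentially a direct application of the definitions.
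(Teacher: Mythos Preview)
Your argument is correct and is precisely the natural one: the paper gives no proof at all, merely stating that the observation is straightforward, and your write-up supplies exactly the details one would expect (parabolic subgroups, restriction of positive definite functions, and the fact that positive definiteness is tested on finitely supported functions).
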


\begin{theorem}
Assume $n$ is finite.
The set $P_n$ form a simplex whose
vertices (extreme points) are $f^n_l(k)={n\choose l}^{-1}\sum_{i=0}^l(-1)^i{k\choose i}{n-k\choose l-i}$,
where $0\leq l\leq n$.  Equivalently, every normalised radial positive definite function on the group $\Rad_n$
is of the form
$$
\varphi(x)=\sum_{l=0}^n\lambda_lf^n_l(|x|),
$$
where the sequence of nonnegative numbers $(\lambda_l)_{l=0}^n$ is unique and satisfies
$\sum_{l=0}^n\lambda_l=1$.
\end{theorem}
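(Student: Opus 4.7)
The plan is to exploit the fact that $\Rad_n = (\B Z/2)^n$ is a finite abelian group, whose positive definite functions admit a transparent Fourier description. The dual is again $(\B Z/2)^n$, with characters indexed by subsets $A \subseteq S$ via $\chi_A(w) = (-1)^{\#(A \cap S_w)}$. Bochner's theorem (which for finite abelian groups reduces to Fourier inversion with a positivity constraint) says that $\varphi$ is positive definite iff $\varphi = \sum_{A\subseteq S} c_A \chi_A$ with $c_A \geq 0$; the normalization $\varphi(e)=1$ becomes $\sum_A c_A = 1$. So the full set of normalized positive definite functions on $\Rad_n$ is a simplex whose $2^n$ vertices are the characters themselves.

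Next I would cut out the radial subsimplex. The symmetric group $\F S_n$ acts on $\Rad_n$ by permuting the generators, preserving $|\cdot|$ and acting transitively on elements of fixed length; hence a function on $\Rad_n$ is radial iff it is $\F S_n$-invariant (the easy verification here uses that $w \leftrightarrow S_w$ identifies $\Rad_n$ with $2^S$, so $|w|$ separates $\F S_n$-orbits). The induced action on characters is $\chi_A \mapsto \chi_{\sigma(A)}$, with orbits parametrized by $l := \#A \in \{0,1,\dots,n\}$. By uniqueness of the Fourier coefficients, $\varphi$ is radial iff $c_A$ depends only on $\#A$; writing $c_A = \lambda_l \binom{n}{l}^{-1}$ for $\#A = l$, we get
\[
\varphi(w) \;=\; \sum_{l=0}^n \frac{\lambda_l}{\binom{n}{l}} \sum_{\#A=l} \chi_A(w).
\]
For $|w|=k$, the inner sum is a direct combinatorial count: a size-$l$ subset $A$ meets $S_w$ in $i$ elements (contributing sign $(-1)^i$) in exactly $\binom{k}{i}\binom{n-k}{l-i}$ ways, so summing over $i$ yields $\binom{n}{l}\,f^n_l(k)$. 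In particular $f^n_l(0) = 1$, so each $f^n_l$ lies in $P_n$, being a convex average of characters with weights $1/\binom{n}{l}$.

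The simplex structure then follows at once. The characters $\chi_A$ are linearly independent, so their $\F S_n$-orbit averages $f^n_l$ are linearly independent as functions of $k$, hence affinely independent in $P_n$. The representation $\varphi = \sum_l \lambda_l f^n_l$ therefore has unique coefficients, and the Fourier criterion $c_A \geq 0$, $\sum c_A = 1$ translates exactly to $\lambda_l \geq 0$, $\sum_l \lambda_l = 1$. I do not expect any substantive obstacle: the only slightly delicate step is the equivalence "radial $\iff$ $\F S_n$-invariant on the Fourier side", which is routine once the character group is identified with $2^S$.
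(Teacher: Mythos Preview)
Your argument is correct and follows essentially the same route as the paper: identify $\widehat{\Rad_n}\cong\Rad_n$, apply Bochner's theorem so that normalised positive definite functions correspond to probability measures on the dual, observe that radiality is equivalent to $\F S_n$-invariance of the measure, and conclude that the extreme radial functions are the orbit averages $f^n_l$ of the characters over the length-$l$ spheres. Your write-up is slightly more explicit than the paper's in two places---the combinatorial derivation of the formula for $f^n_l(k)$ and the linear-independence argument securing uniqueness of the $\lambda_l$---but these are elaborations of the same proof rather than a different approach.
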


\begin{proof}
We can indentify the dual $\widehat{\Rad_n}$ group of $\Rad_n$ with $\Rad_n$ via the paring
$(x,y)=(-1)^{\sum_{i=1}^n x_iy_i}$.  By Bochner's theorem every normalised
positive definite function $\varphi$ on $\Rad_n$ is of the form
\[
\varphi(x)=\int_{\widehat\Rad_{\infty}}(x,y)\,\mu(dy),
\]
for some probability measure $\mu$.  Clearly, such a function
is radial if and only if $\mu$ is invariant under the action of $\F S_n$.

Among such measures extreme ones are measures $\mu_l$ for $0\leq l\leq n$,
where $\mu_l$ is equally distributed among elements of length $l$.
Moreover,
\[
\varphi(x)=\int_{\widehat\Rad_{\infty}}(x,y)\,\mu_l(dy)
=f^n_l(|x|)
\]
as claimed.
\end{proof}

The following theorem is a version of the classical de Finetti Theorem
(see \cite[p. 223]{MR0270403}) for the infinite Rademacher group.

\begin{theorem}
Assume that $\varphi$ is a radial function on the Rademacher group
$\Rad_\infty$.
Then $\varphi$ is a normalised positive definite if and only if there exists
a probability measure $\mu$ on $[-1,1]$ such that
$$
\varphi(x)=\int_{-1}^1 q^{|x|}\mu(dq).
$$
This measure $\mu$ is unique.
\end{theorem}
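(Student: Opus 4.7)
The plan is to combine Bochner's theorem on the compact dual of $\Rad_\infty$ with the classical de Finetti theorem for exchangeable $\{\pm1\}$-sequences. The sufficiency direction is straightforward: for each $q\in[-1,1]$, the function $x\mapsto q^{|x|}$ on $\Rad_\infty$ is an infinite tensor product of the positive semidefinite $2\times2$ matrix $\bigl(\begin{smallmatrix}1&q\\q&1\end{smallmatrix}\bigr)$ acting on each $\B Z/2$-factor, hence is itself normalised and positive definite; integrating against any probability measure on $[-1,1]$ preserves these properties.

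For the necessity direction, I would identify the Pontryagin dual $\widehat{\Rad_\infty}$ with the compact group $\{\pm1\}^{\B N}$ via the pairing $(x,y)=\prod_i y_i^{x_i}$, as was already done in the finite case of the previous theorem, and apply Bochner's theorem to write $\varphi(x)=\int(x,y)\,\mu(dy)$ for a unique probability measure $\mu$ on $\{\pm1\}^{\B N}$. Radiality of $\varphi$ is equivalent to invariance under the natural permutation action of $\F S_\infty$ on coordinates; by uniqueness of the representing measure, this transfers to $\mu$ itself being $\F S_\infty$-invariant, i.e.~exchangeable.

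At this point the classical de Finetti theorem for $\{\pm1\}$-valued exchangeable sequences (as recalled in \cite[p.~223]{MR0270403}) supplies a unique decomposition $\mu=\int_{-1}^1\mu_q^{\otimes\B N}\,\nu(dq)$, where $\mu_q$ denotes the Bernoulli measure on $\{\pm1\}$ with mean $q$ and $\nu$ is a probability measure on $[-1,1]$. A direct computation gives $\int(x,y)\,\mu_q^{\otimes\B N}(dy)=\prod_{i\colon x_i=1}q=q^{|x|}$, and Fubini assembles this into the desired representation (with the $\nu$ produced by de Finetti playing the role of $\mu$ in the statement).

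Uniqueness of $\nu$ is then automatic: the values $\varphi(r_{i_1}\cdots r_{i_k})=\int_{-1}^1 q^k\,\nu(dq)$ prescribe every moment of $\nu$, and polynomials are dense in $C([-1,1])$ by Stone--Weierstrass. The main obstacle is conceptual rather than technical, namely carefully translating radiality through the Bochner bijection into classical exchangeability so that de Finetti becomes directly applicable; once this dictionary is in place, the remainder of the argument reduces to the moment computation above.
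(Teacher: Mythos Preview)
Your argument is correct, and it shares its opening moves with the paper: both proofs pass to the compact dual $\widehat{\Rad_\infty}$, apply Bochner's theorem to obtain a representing probability measure, and translate radiality of $\varphi$ into $\F S_\infty$-invariance (exchangeability) of that measure. The divergence comes at the next step. You invoke the classical de Finetti theorem as a black box to decompose the exchangeable law on $\{\pm1\}^{\B N}$ as a mixture of i.i.d.\ Bernoulli laws, and then read off $\varphi(x)=\int q^{|x|}\,\nu(dq)$ by a one-line moment computation. The paper instead \emph{reproves} the relevant instance of de Finetti from scratch: it shows that the sequence $a_n=\eta(C_n(1^n))$ is completely monotone by an explicit cylinder-set decomposition, applies Hausdorff's theorem to produce a measure $\rho$ on $[0,1]$ with $a_n=\int u^n\,d\rho(u)$, and then performs the affine change of variable $q=1-2u$ to land on $[-1,1]$.

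Your route is shorter and more conceptual once de Finetti is granted, and it makes transparent why the theorem is called ``a version of the classical de Finetti theorem'' in the text. The paper's route is more self-contained --- it does not import de Finetti wholesale but rather isolates the exact combinatorial and moment-theoretic content needed --- and it exhibits explicitly the Hausdorff moment problem underlying the representation. Either approach yields uniqueness in the same way (the moments $\int q^k\,d\mu$ are prescribed), so neither gains anything there.
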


\begin{proof}
Since the function $q^{|x|}$ is normalised positive definite for $q\in[-1,1]$, the ``if'' implication is obvious.

Assume that $\varphi$ is normlised positive definite.
The group $\Rad_{\infty}$ is discrete and Abelian and its dual is the compact group
$\widehat{\Rad_\infty}=\prod_{i=1}^{\infty}\B Z/2$.
By Bochner's theorem, there exists a probability measure $\eta$ on $\widehat\Rad_{\infty}$
such that
\[
\varphi(x)=\int_{\widehat\Rad_{\infty}}(x,y)\,d\eta(y),
\]
where for $x=(x_1,x_2,\ldots)\in \Rad_\infty$, $y=(y_1,y_2,\ldots)\in \widehat\Rad_{\infty}$
we put $(x,y)=(-1)^{\sum_{i=1}^\infty x_i y_i}$.
The radiality of $\varphi$ is equivalent to the fact that for every permutation $\sigma\in\mathfrak{S}_\infty$
we have $\varphi(x)=\varphi(\sigma(x))$, where $\sigma(x)=(x_{\sigma(1)},x_{\sigma(2)},\ldots)$.
This, in turn, implies that $\eta$ is $\sigma$-invariant for every $\sigma\in\mathfrak{S}_\infty$,
i.e. we have $\eta(A)=\eta(\sigma(A))$ for every Borel subset $A\subset\widehat\Rad_{\infty}$.

For a sequence $\B \epsilon=(\epsilon_i)_{i=1}^n\in\{0,1\}^n$ we define
$C_{n}(\B \epsilon)\subseteq\widehat\Rad_{\infty}$ by
\[
C_{n}(\B \epsilon)=\{y\in\widehat\Rad_{\infty}|y_i=\epsilon_i:1\leq i\leq n\},
\]
in particular $C_0(\varnothing)=\widehat\Rad_{\infty}$.
Then we have $\eta(C_{n}(\epsilon))=\eta(C_{n}(\B\epsilon'))$
if $\epsilon'_i=\epsilon_{\sigma(i)}$ for some $\sigma\in\mathfrak{S}_n$
and every $1\leq i\leq n$.
For $\varepsilon \in \B R$ we put
\[
\varepsilon^n=\underbrace{\varepsilon,\varepsilon,\ldots,\varepsilon}_{n} 
\]
and $a_n=\eta(C_{n}(1^{n}))$. Moreover, for $n,k\ge0$ we define the difference operators $\Delta^{k}a_n$ by induction:
$\Delta^{0}a_n=a_n$ and $\Delta^{k+1}a_n=\Delta^{k}a_{n+1}-\Delta^{k}a_{n}$.
We claim that
\begin{equation}\label{deltakn}
(-1)^{k}\Delta^{k}a_{n}=\eta\left(C_{n+k}\left(1^n0^k\right)\right).
\end{equation}
Denoting the right hand side of (\ref{deltakn}) by $c(n,k)$ we note that $c(n,0)=a_n$ and
\[
C_{n+k+1}\left(1^n0^k0\right)\cup
C_{n+k+1}\left(1^n0^k1\right)
=C_{n+k}\left(1^n0^k\right),
\]
is a disjoint union.  This implies
\[
c(n,k+1)=c(n,k)-c(n+1,k).
\]
This formula, by induction on $k$, leads to (\ref{deltakn}).

From (\ref{deltakn}) we see that the sequence $(a_n)$ is \textit{completely monotone},
i.e. that $(-1)^{k}\Delta^{k}a_n\ge0$ for all $n,k\ge0$.
By the celebrated theorem of Hausdorff (see \cite[S\"atze \textsc{\MakeLowercase{ii}} und \textsc{\MakeLowercase{iii}}]{MR1544467}),
there exists a unique probability measure
$\rho$ on $[0,1]$ such that
\begin{equation}
\label{eq-Haus}
(-1)^{k}\Delta^{k}a_n=\int_{0}^{1} u^n(1-u)^k\,d\rho(u).
\end{equation}
(Note that Equation (\ref{eq-Haus}) for arbitrary $k\geq 0$ follows from the case $k=0$.)
 
For $x=\left(1^n0^\infty\right)\in \Rad_\infty$ so that $|x|=n$, we have
\begin{align*}
\varphi(x)&=\int_{\widehat\Rad_{\infty}}(x,y)\,d\eta(y)
=\int_{\widehat\Rad_{\infty}}(-1)^{\sum_{i=1}^ny_i}\,d\eta(y)\\
&=\sum_{\epsilon\in\{0,1\}^{n}}(-1)^{\sum_{i=1}^n\epsilon_i}\eta(C_{n}(\epsilon))
=\sum_{k=0}^{n}\binom{n}{k}(-1)^{k}\eta\left(C_n\left(1^k0^{n-k}\right)\right)\\
&=\sum_{k=0}^{n}\binom{n}{k}(-1)^{k}\int_{0}^{1} u^k(1-u)^{n-k}\,d\rho(u)
=\int_{0}^{1}(1-2u)^{n}\,d\rho(u)\\
&=\int_{-1}^{1}q^n \,d\mu(q),
\end{align*}
where $\mu$ is defined by $\mu(A)=\rho\left(\frac{1}{2}+\frac{1}{2}A\right)$ for a Borel set $A\subseteq[-1,1]$.
\end{proof}

\section{Remarks on radial positive definite functions on some infinitely generated Coxeter groups}

In this Section we extend the last theorem of the previous section to a certain class of Coxeter groups.

\begin{theorem}
Assume that $(W,S)$ is a Coxeter system and that there is an infinite subset $S_0\subseteq S$
such that $st=ts$ for $s,t\in S_0$.
Assume that $\varphi$ is a radial function on $W$ with $\varphi(e)=1$.
Then $\varphi$ is positive definite if and only if there exists
a probability measure $\mu$ on $[-1,1]$ such that
$$
\varphi(\sigma)=\int_{-1}^1 q^{|\sigma|}\mu(dq).
$$
This measure $\mu$ is unique.
\end{theorem}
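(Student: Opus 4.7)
The plan is to reduce the statement to the previous theorem (the de Finetti theorem for $\Rad_\infty$) by restricting $\varphi$ to a well-chosen parabolic subgroup. The ``if'' direction is immediate: the introduction cites from \cite{MR950825} that $w\mapsto q^{|w|}$ is normalised positive definite on any Coxeter group for each $q\in[-1,1]$, and positive definiteness is preserved under integration against a probability measure on $[-1,1]$.

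For the ``only if'' direction, I would first pass to the parabolic subgroup $W_{S_0}$. Since $S_0$ consists of pairwise commuting involutions, $W_{S_0}$ is isomorphic as a Coxeter group to $\Rad_{S_0}\cong\Rad_\infty$. A standard property of Coxeter systems is that the length function on $W$ restricts to the intrinsic length on $W_{S_0}$; in the present Abelian situation this is transparent, because any reduced expression of an element of $W_{S_0}$ uses pairwise distinct generators from $S_0$, and no shorter expression in $S$ can exist. Consequently $\varphi|_{W_{S_0}}$ is a radial, normalised, positive definite function on $\Rad_\infty$, and the previous theorem produces a unique probability measure $\mu$ on $[-1,1]$ with
\[
\varphi(\tau)=\int_{-1}^{1}q^{|\tau|}\,\mu(dq)\qquad\text{for all }\tau\in W_{S_0}.
\]

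To propagate this formula from $W_{S_0}$ to all of $W$, I would use that $S_0$ is infinite: given any $w\in W$ with $|w|=n$, pick $n$ distinct generators $s_1,\dots,s_n\in S_0$ and set $\tau=s_1\cdots s_n\in W_{S_0}$, which has length exactly $n$. Radiality of $\varphi$ on $W$ then forces $\varphi(w)=\varphi(\tau)=\int_{-1}^{1}q^{n}\,\mu(dq)$, which is the desired integral representation. Uniqueness of $\mu$ is inherited directly from the uniqueness clause of the preceding theorem, since $\mu$ is already pinned down by the values of $\varphi$ on $W_{S_0}$.

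The main (and essentially only) substantive point is the length-compatibility claim $|\tau|_{S}=|\tau|_{S_0}$ for $\tau\in W_{S_0}$, a classical consequence of the Exchange/Deletion Condition for Coxeter systems. I would spell it out explicitly (or cite \cite{MR0240238,MR1066460}), because without it the reduction to $\Rad_\infty$ would not yield a formula involving the correct length on the right-hand side. Every other step is either a direct invocation of the previous theorem or an elementary use of the radiality hypothesis together with the infinitude of $S_0$.
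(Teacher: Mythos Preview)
Your argument is correct and follows exactly the approach of the paper, which compresses the whole proof into a single sentence observing that $W_{S_0}\cong\Rad_\infty$ as a parabolic subgroup. You have simply spelled out the implicit steps (length compatibility on parabolics, the ``if'' direction via \cite{MR950825}, and the use of radiality plus infinitude of $S_0$ to propagate the integral formula to all of $W$), all of which the paper takes for granted.
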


\begin{proof}
It is sufficient to note that the group generated by $S_0$ is a parabolic subgroup isomorphic with
$\Rad_\infty$.
\end{proof}

\textbf{Example.}
For $W=\mathfrak{S}_\infty$ we have $S=\{(n,n+1):n\in\B N\}$.
Then we can take $S_0=\{(2n-1,2n):n\in\B N\}$.  Similar
$S_0$ can be found in infinitely generated groups of type \textsc{\MakeLowercase{B}} and \textsc{\MakeLowercase{D}}.

\begin{problem}
When $-1\leq q\leq1$, $q\neq 0$ is the positive definite function
$q^{|x|}$ on $\F S_\infty$ an extreme point in the set of
normalised positive definite functions?
\end{problem}

\begin{theorem}\label{T:frac}
The function $\varphi_p(\sigma)=e^{-t|\sigma|^p}$ is positive definite on $\F S_\infty$
if and only if $0\leq p\leq1$.
\end{theorem}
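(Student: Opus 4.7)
My plan is to reduce the problem, via the preceding theorem, to a moment problem on $[-1,1]$. Since $\varphi_p$ is radial and $\F S_\infty$ contains $\Rad_\infty$ as a parabolic subgroup (as in the example above), the preceding theorem tells us $\varphi_p$ is normalised positive definite if and only if there exists a probability measure $\mu$ on $[-1,1]$ with
\[
e^{-tn^p}=\int_{-1}^{1}q^n\,\mu(dq)\qquad\text{for every }n\ge 0.
\]
The task thus becomes to recognise the left hand side as such a moment sequence exactly when $p\in[0,1]$.

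For the ``if'' direction, assume $0\le p\le 1$ and $t>0$. The function $x\mapsto x^p$ is a Bernstein function on $[0,\infty)$ (its derivative $p\,x^{p-1}$ is completely monotone), so its composition with $x\mapsto e^{-tx}$ is completely monotone. By Bernstein's theorem there is a probability measure $\nu$ on $[0,\infty)$ with
\[
e^{-tx^p}=\int_0^\infty e^{-sx}\,d\nu(s).
\]
Pushing $\nu$ forward by $s\mapsto e^{-s}$ produces a probability measure $\mu$ supported on $[0,1]\subseteq[-1,1]$ realising the desired integral representation.

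For the ``only if'' direction, assume $p>1$, $t>0$, and that such a $\mu$ exists. I would restrict to even moments: let $\nu$ be the pushforward of $\mu$ under $q\mapsto q^2$, which is a probability measure on $[0,1]$, and set
\[
b_k:=\int_0^1 x^k\,d\nu(x)=e^{-t(2k)^p},\quad k\ge 0.
\]
By Cauchy--Schwarz applied to $\nu$, the sequence $(b_k)$ is log-convex; in particular $b_1^2\le b_0 b_2$. With $b_0=1$, $b_1=e^{-t\cdot 2^p}$ and $b_2=e^{-t\cdot 4^p}$ this reduces to $2\cdot 2^p\ge 4^p$, i.e.\ $p\le 1$, contradicting the assumption $p>1$.

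The only real subtlety is that $\mu$ may charge negative values of $q$, so Cauchy--Schwarz cannot be applied directly to the sign-varying sequence $\bigl(e^{-tn^p}\bigr)_n$. Passing to even moments converts the problem into a genuine Hausdorff moment problem on $[0,1]$, where the required log-convexity is already violated at $k=0$ the instant $p$ exceeds one; this is what makes the ``only if'' direction snap shut at the first step.
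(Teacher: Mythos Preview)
Your proof is correct. The ``if'' direction is essentially the same as the paper's: both establish that $e^{-tx^p}$ is completely monotone for $0\le p\le 1$ (you invoke Bernstein functions, the paper does the elementary induction on derivatives), and then use this to write $\varphi_p$ as a mixture of the positive definite functions $q^{|\cdot|}$.

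The ``only if'' direction, however, is genuinely different. The paper also passes to even moments $\int_{-1}^{1}q^{2n}\,d\mu=q_0^{(2n)^p}$, but then uses an asymptotic argument: the $n$-th root of the left side converges to $\max\{q^2:q\in\operatorname{supp}\mu\}$, while the $n$-th root of the right side tends to $0$ when $p>1$, forcing $\mu=\delta_0$, a contradiction. Your route is to observe that the pushforward moments $b_k=e^{-t(2k)^p}$ must be log-convex by Cauchy--Schwarz, and that the single inequality $b_1^2\le b_0b_2$ already fails for $p>1$. Your argument is sharper in the sense that it detects the obstruction at the very first step rather than in the limit; the paper's version has the mild advantage of identifying the limiting measure explicitly. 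Both are short and both rely on the same trick of squaring to land in a Hausdorff situation on $[0,1]$.
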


\begin{proof}
{\em A contrario.}  Assume that for some $p>1$ and $t_0>0$ the function $\psi_p(\sigma)=e^{-t_0|\sigma|^p}$
is positive definite on $\F S_\infty$.

For $q_0=e^{-t_0}$, choosing $\sigma$ such that $|\sigma|=2n$ we have
$q_0^{(2n)^p}=\int_{-1}^1q^{2n}\,d\mu_0(q)$ for some probability measure $\mu_0$
on $[-1,1]$.  Since $\left(\int_{-1}^1q^{2n}\,d\mu_0(q)\right)^{1/n}$ tends to $\max\{q^2|q\in\mathop{\hbox{\rm supp}}\mu_0\}$
while $\left(q_0^{(2n)^p}\right)^{1/n}\to 0$, we conclude that $\mu_0$ is the Dirac measure at $0$,
which is a contradiction.

The ``if'' part is standard.
We need to show that $f(x)=e^{-tx^p}$ is the Laplace transform of some probability measure supported on $[0,\infty)$,
so $f$ is a convex combination of functions of the form $e^{-sx}$.

By characterisation of Laplace transforms (see \cite[Satz \textsc{\MakeLowercase{iii}}]{MR1544467})
this is equivalent to \emph{complete monotonicity}, that is $(-1)^nf^{(n)}>0$ for all $n=0,1,\dots$.
And indeed, by induction, $(-1)^nf^{(n)}$ is a positive linear
combination of positive functions of the form $x^{pj-n}f(x)$ for $1\leq j\leq n$.
\end{proof}

The measures with Laplace transforms $e^{-tx^p}$ for $t\geq0$ and $0\leq p\leq 1$
are studied in detail in \cite[Ch.~\textsc{\MakeLowercase{ix}.11}]{MR617913}
(see Propositions~1 and 2 there).

Let us note that for such groups like $\B Z^k$ or $\B R^k$ with the Euclidean distance $d$
the functions $\exp(-td^p)$ are positive definite for all $t\geq 0$ and $0\leq p\leq 2$
(the case $p=2$ corresponds to the Gau\ss ian Law).

\section{The longest element}

If a Coxeter group $W$ is finite, then it contains the unique element $\omega_\circ$
which has the maximal length with respect to $|\cdot|$.

From the definition it is clear, that a function $\varphi$ on a group $\Gamma$ with values in the field
of complex numbers $\B C$ is {\em positive definite}
if and only $\sum_{g\in\Gamma}\varphi(g)g$ is a nonnegative (bounded if the group is finite)
operator on $\ell^2\Gamma$.  (We will identify $g\in \Gamma$ with $\lambda(g)\in B(\ell^2\Gamma)$,
where $\lambda$ is the left regular representation, for short.)

Let $W$ be a finite Coxeter group.  The following two statements are well known.
\begin{enumerate}[label={(\textsc{\alph*})}]
\item The function $q^{|w|}$ is positive definite for any $0\leq q\leq 1$.
\item The function $\Delta(w)=|\omega_\circ|/2-|w|$ is positive definite.
\end{enumerate}
The first one was proven in \cite{MR950825} (even for infinite Coxeter groups and also for
$-1\leq q\leq 1$) while the second --- in \cite[Proposition~6]{MR2009841}.
Here we give a short direct prove of the following.

\begin{proposition}\label{P: }
The above statements (\textsc{a}) and (\textsc{b}) are equivalent.
\end{proposition}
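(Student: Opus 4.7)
The plan is to prove the two implications by different routes: (\textsc{b}) $\Rightarrow$ (\textsc{a}) follows directly from Schoenberg's theorem, while (\textsc{a}) $\Rightarrow$ (\textsc{b}) exploits the length-reversing property $|w\omega_\circ|=|\omega_\circ w|=N-|w|$ of the longest element, where $N:=|\omega_\circ|$. Throughout, in the group algebra acting on $\ell^2 W$ via the left regular representation $\lambda$, define $A:=\sum_w|w|\lambda(w)$, $J_0:=\sum_w\lambda(w)$, and $B(q):=\sum_w q^{|w|}\lambda(w)$. Statements (\textsc{a}) and (\textsc{b}) are then equivalent to the operator inequalities $B(q)\ge0$ for all $q\in[0,1]$ and $(N/2)J_0-A\ge0$, respectively.

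For (\textsc{b}) $\Rightarrow$ (\textsc{a}): if $\sum_x\alpha(x)=0$, the constant $N/2$ contributes zero to the Gram sum, so positive definiteness of $\Delta$ yields $\sum_{x,y}\alpha(x)\overline{\alpha(y)}\,|y^{-1}x|\le 0$. Hence $|\cdot|$ is conditionally negative definite on $W$, and Schoenberg's theorem gives that $e^{-t|w|}$ is positive definite for every $t\ge 0$; setting $q=e^{-t}$ covers $q\in(0,1]$, and the case $q=0$ is trivial (the indicator of $\{e\}$).

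For (\textsc{a}) $\Rightarrow$ (\textsc{b}): the substitutions $w\mapsto\omega_\circ w$ and $w\mapsto w\omega_\circ$, combined with the length-reversing property, yield the key operator identity
$$
\omega_\circ A \;=\; A\omega_\circ \;=\; N J_0 - A.
$$
Thus $A$ commutes with $\omega_\circ$, and $(1+\omega_\circ)A=N J_0$. Since $\omega_\circ^2=e$, we have $1+\omega_\circ=2P_+$, where $P_\pm$ denote the orthogonal projections onto the $\pm1$-eigenspaces of $\omega_\circ$ on $\ell^2 W$. The constant vector $\B 1$ is $\omega_\circ$-invariant and spans the range of $J_0$, so $P_- J_0=J_0 P_-=0$; consequently $P_+ A=(N/2)J_0$, and
$$
(N/2)J_0 - A \;=\; -P_- A P_-.
$$
Now (\textsc{a}) gives $B(1-s)\ge 0$ for small $s\ge 0$; combined with the expansion $B(1-s)=J_0-sA+O(s^2)$ and compression by $P_-$, we find
$$
0 \;\le\; P_- B(1-s) P_- \;=\; -s\, P_- A P_- + O(s^2).
$$
Dividing by $s>0$ and letting $s\to 0^+$ gives $P_- A P_-\le 0$, which by the previous display is equivalent to (\textsc{b}).

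The main work lies in establishing the operator identity $\omega_\circ A=N J_0-A$, which encodes the combinatorial essence of $\omega_\circ$ being the longest element; the subsequent spectral decomposition of $\omega_\circ$ on $\ell^2 W$ and the first-order expansion of $B(q)$ at $q=1$ are then routine.
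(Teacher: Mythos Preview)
Your proof is correct and follows essentially the same route as the paper. For (\textsc{a}) $\Rightarrow$ (\textsc{b}) both arguments exploit the length-reversing identity $|\omega_\circ w|=N-|w|$, compress $B(q)$ by the projection $P_-=(1-\omega_\circ)/2$ (the paper calls it $Q$), and differentiate at $q=1$; the paper parametrises by $q=e^{-t}$ and obtains $\sum_w\sinh(t\Delta(w))\,w$ before letting $t\to0$, while you parametrise by $q=1-s$ and read off the linear term --- these are the same limit. For (\textsc{b}) $\Rightarrow$ (\textsc{a}) you invoke Schoenberg's theorem, whereas the paper writes out the exponential series $e^{t\Delta}=\sum_n t^n\Delta^n/n!$ and applies the Schur product lemma directly; since Schoenberg is proved by exactly that series argument, the two are equivalent, the paper's version being slightly more self-contained.
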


\begin{proof}
Let $q=e^{-t}$ (with $t\geq0$, as we assume $q\leq 1$).
The case (\textsc{a}) is equivalent to $\Phi_t=\sum_{w\in W}e^{t\Delta(w)}w=e^{t|\omega_\circ|/2}\sum_{w\in W}q^{|w|}w$
being nonnegative.

Assume (\textsc{a}).
Recall first, that $|\omega_\circ w|=|\omega_\circ|-|w|=|w\omega_\circ|$.  Therefore
$|\omega_\circ|/2-|\omega_\circ w|=-(|\omega_\circ|/2-|w|)$, ie.
$\Delta(\omega_\circ w)=-\Delta(w)$ and similarly, $\Delta(w\omega_\circ)=-\Delta(w)$.

The equality $\Delta(\omega_\circ w)=\Delta(w\omega_\circ)$ implies that $\omega_\circ$
(and thus $Q=(1-\omega_\circ)/2$) commutes with $\Delta$ (and thus $\Phi_t$).
Since $Q=Q^2$ is nonnegative we conclude that
$$
t^{-1}\Phi_tQ=\sum_{w\in W}\frac{e^{t(|\omega_\circ|/2-|w|)}-e^{t(|\omega_\circ|/2-|w\omega_\circ|)}}{2t}w
=\sum_{w\in W}\frac{\sinh(t\Delta(w))}{t}w.
$$
is nonnegative.  Therefore, taking the limit as $t\to0$, we obtain that
$\sum_{w\in W}\Delta(w)w$ is nonnegative.
Thus (\textsc{b}).

Assuming (\textsc{b}) and using the Schur lemma, which says that the (pointwise)
product of positive definite functions is positive definite, we get that
$$
\Phi_t=\sum_{w\in W} e^{t\Delta(w)}w
=\sum_{n\geq 0}\frac{t^n}{n!}\left(\sum_{w\in W}\Delta(w)^nw\right)
$$
is nonnegative.
Thus (\textsc{a}).
\end{proof}

\section{Colour-dependent positive definite functions on Coxeter groups}

The question which colour-dependant or colour-radial functions
are positive functions on Coxeter groups is wide open.
In this section we provide some sufficient conditions.
In the next section we will
examine the dihedral groups in full details.

\begin{lemma}\label{alemmasubgroup}
Let $H$ be a subgroup of a group $\Gamma$ of index $d$.
Then the function $\varphi_r$ defined by $\varphi_{r}(x)=1$ if $x\in H$
and $\varphi_{r}(x)=r$ otherwise is positive definite on $\Gamma$
if and only if $r\in[-1/(d-1),1]$,
with natural convention that if $d=\infty$ then $-1/(d-1)=0$.
\end{lemma}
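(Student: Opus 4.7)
The strategy is to identify $\varphi_r$ with an affine combination of two distinguished positive definite functions --- the constant $1$ and the indicator $1_H$ --- and to pin down the extremal value of $r$ by testing the definition of positive definiteness on coset representatives.

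First I would observe that $\varphi_r = r + (1-r)\,1_H$, so since convex combinations of normalised positive definite functions are positive definite, the ``if'' direction reduces to showing positive definiteness at the two endpoints: $r=1$ (trivial, $\varphi_1\equiv 1$) and, for $d$ finite, $r=-1/(d-1)$; in the $d=\infty$ case it reduces to positive definiteness of $\varphi_0 = 1_H$ alone.

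Both endpoints come from one source: the quasi-regular representation $\pi$ of $\Gamma$ on $\ell^2(\Gamma/H)$. The diagonal matrix coefficient $\langle\pi(x)\delta_H,\delta_H\rangle$ equals $1_H(x)$, settling the $r=0$ case (and hence the endpoint when $d=\infty$). For $d$ finite, the $\pi$-invariant vector $v = d^{-1/2}\sum_{gH}\delta_{gH}$ spans the trivial summand; letting $w$ be the projection of $\delta_H$ onto $v^\perp$, a short calculation gives $\|w\|^2 = (d-1)/d$ and $\langle\pi(x) w, w\rangle = 1_H(x) - 1/d$. Dividing by $\|w\|^2$ yields the normalised positive definite function $(d\cdot 1_H - 1)/(d-1)$, which is precisely $\varphi_{-1/(d-1)}$. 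Convexity now covers all $r$ in the asserted interval.

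For the ``only if'' direction, the upper bound $r \leq 1$ is forced by $|\varphi_r(x)| \leq \varphi_r(e) = 1$. For the lower bound I would test positive definiteness directly on coset representatives: choose $n \leq d$ distinct representatives $g_1 = e, g_2,\dots, g_n$ and set $\alpha = 1$ on this set, $0$ elsewhere. Since $g_i^{-1} g_j \in H$ iff $i = j$, the quadratic form $\sum_{i,j}\varphi_r(g_j^{-1} g_i)\alpha(g_i)\overline{\alpha(g_j)}$ evaluates to $n + n(n-1) r$, which must be nonnegative. Taking $n = d$ when $d$ is finite yields $r \geq -1/(d-1)$, while letting $n \to \infty$ when $d = \infty$ forces $r \geq 0$. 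There is no genuine obstacle; the only nontrivial step is the matrix coefficient calculation, which is routine linear algebra in $\ell^2(\Gamma/H)$.
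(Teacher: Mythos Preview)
Your proof is correct and follows the same overall architecture as the paper: reduce the ``if'' direction to the two endpoints via convexity (since $\varphi_r$ is affine in $r$), and handle the ``only if'' direction by testing on a set of coset representatives, obtaining $n + n(n-1)r \geq 0$.

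The one substantive difference is how the extremal endpoint $r_0 = -1/(d-1)$ is treated. The paper gives a direct sum-of-squares identity: writing $a_i = \sum_{x\in H_i} f(x)$ for the coset sums, one has
\[
\sum_{x,y\in\Gamma}\varphi_{r_0}(y^{-1}x)f(x)\overline{f(y)} = \frac{1}{d-1}\sum_{1\le i<j\le d}|a_i - a_j|^2,
\]
which is manifestly nonnegative. You instead realise $\varphi_{r_0}$ as a normalised matrix coefficient of the quasi-regular representation on $\ell^2(\Gamma/H)$, using the projection of $\delta_H$ onto the orthogonal complement of the invariant vector. These are two faces of the same coin: your matrix coefficient, unwound through the GNS construction, \emph{is} the paper's sum-of-squares formula (the hyperplane orthogonal to $(1,\dots,1)$ in $\B C^d$). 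Your version is more conceptual and explains \emph{why} the bound $-1/(d-1)$ appears; the paper's version is more elementary and self-contained, requiring no representation-theoretic vocabulary.
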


Note, that if $H=\{1\}$ then $d=|\Gamma|$.

\begin{proof}
First assume that $d$ is finite and let us enumerate the left cosets:
\[
\{gH:g\in \Gamma\}=\{H_1,H_2,\dots,H_d\}.
\]
Note, that for $x\in H_i, y\in H_j$ we have $y^{-1}x\in H$
if and only if $i=j$. Therefore, for $r_0=-1/(d-1)$
and for a finitely supported
complex function $f$ on $\Gamma$ we have
\[
\sum_{x,y\in \Gamma} \varphi_{r_0}(y^{-1}x)f(x)\overline{f(y)}
=\frac{1}{d-1}\sum_{1\le i<j\le d}
\left|\sum_{x\in H_i}f(x)-\sum_{y\in H_j}f(y)\right|^2,
\]
which proves that $\varphi_{r_0}$ is positive definite.
For $r\in[-1/(d-1),1]$ the function $\varphi_r$
is positive definite as a convex combination of $\varphi_{r_0}$
and the constant function $\varphi_1$.

On the other hand, if we choose $x_i\in H_i$ for each $i\le d$
and define
$f$ as the characteristic function of the set $\{x_1,\ldots,x_{d}\}$
then
\begin{equation}\label{apositivedefiniterestriction}
\sum_{x,y\in \Gamma}\varphi_r(y^{-1}x)f(x)\overline{f(y)}
=d+(d^2-d)r,
\end{equation}
which proves that $r\ge -1/(d-1)$ is a necessary condition
for positive definiteness of $\varphi_r$.

If $d=\infty$ then $r_0=0$ and the function $\varphi_0$
is positive definite as the characteristic function of the subgroup $H$.
For ``only if" part we chose an arbitrarily long sequence $x_1,\ldots,x_{d'}$ of elements
from different left cosets and use (\ref{apositivedefiniterestriction}) with $d'$ instead of $d$.
\end{proof}

\begin{theorem}\label{bpropositionproduct}
Assume that for every $s\in S$ we are given a number $q_s$,
\[
\frac{-1}{d_{s}-1}\le q_s\le1,
\]
where $d_{s}$ denotes the index if the parabolic subgroup generated by $S\smallsetminus\{s\}$ in $W$:
$d_{s}=[W:W_{S\smallsetminus\{s\}}]$.
Then the Riesz--Coxeter $R_{\B q}$ is positive definite on $W$.
\end{theorem}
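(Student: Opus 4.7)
The plan is to recognise that each single factor $q_s^{\|w\|_s}$ appearing in the Riesz--Coxeter product is precisely one of the ``coset'' functions covered by Lemma~\ref{alemmasubgroup}, and then to multiply these factors together and invoke Schur's product theorem.

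More concretely, for a fixed $s\in S$ let $H_s=W_{S\smallsetminus\{s\}}$, which has index $d_s$ in $W$. By the characterisation of colour in equation~(\ref{def:colour}), $w\in H_s$ if and only if $s\notin S_w$, i.e.\ $\|w\|_s=0$. Define
\[
\varphi_s(w)=\begin{cases}1&\text{if $w\in H_s$,}\\ q_s&\text{if $w\notin H_s$,}\end{cases}
\]
so that $\varphi_s(w)=q_s^{\|w\|_s}$. Since by hypothesis $q_s\in[-1/(d_s-1),1]$, Lemma~\ref{alemmasubgroup} (applied to $\Gamma=W$, $H=H_s$) gives that $\varphi_s$ is positive definite on $W$.

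Next I would take the product $R_{\B q}(w)=\prod_{s\in S}\varphi_s(w)=\prod_{s\in S}q_s^{\|w\|_s}$. For any given $w$ the support of the product is indexed by the finite set $S_w$, so the product is really finite pointwise. To verify positive definiteness it suffices to test it against a finitely supported $\alpha\colon W\to\B C$; letting $F\subseteq W$ be the finite support of $\alpha$ and
\[
S_0=\bigcup_{x,y\in F}S_{y^{-1}x}\subseteq S,
\]
which is a finite set, we have $R_{\B q}(y^{-1}x)=\prod_{s\in S_0}\varphi_s(y^{-1}x)$ for all $x,y\in F$. Thus on the relevant arguments $R_{\B q}$ agrees with the finite product $\prod_{s\in S_0}\varphi_s$, which by the Schur product theorem (pointwise product of positive definite functions is positive definite, as recalled in the proof of Proposition~\ref{P: }) is positive definite.

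There is essentially no obstacle: once the combinatorial identification $\varphi_s(w)=q_s^{\|w\|_s}$ is made, the proof collapses into Lemma~\ref{alemmasubgroup} plus Schur. The only (mild) subtlety is that $S$ may be infinite, which is handled by the observation above that on any finite set of group elements only finitely many factors are non-trivial, so no convergence issue arises.
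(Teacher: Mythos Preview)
Your proof is correct and follows exactly the same route as the paper: identify $w\mapsto q_s^{\|w\|_s}$ with the coset function of Lemma~\ref{alemmasubgroup} for the subgroup $W_{S\smallsetminus\{s\}}$, then apply the Schur product theorem. Your extra care with the case of infinite $S$ (reducing to a finite subproduct via the support of the test function) is a detail the paper leaves implicit.
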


\begin{proof}
From Lemma~\ref{alemmasubgroup} the function $w\mapsto q_{s}^{\|w\|_s}$
is positive definite for $s\in S$ and $-1/(d_{s}-1)\le q_s\le1$.
Since the pointwise product of positive definite functions is positive definite,
the statement holds.
\end{proof}

\textbf{Example.} Take $W=\mathfrak{S}_n$, the permutation group on the set $\{1,2,\ldots,n\}$.
It is generated by the transpositions $S=\{s_i=(i,i+1), 1\le i\le n-1\}$.
For $1\le i\le n-1$ the parabolic subgroup generated by $S\smallsetminus\{s_i\}$
is isomorphic with $\mathfrak{S}_{i-1}\times \mathfrak{S}_{n-i-1}$, so its index
is $i{n\choose i}$.

It would be interesting to determine for which $r$ the function
$w\mapsto r^{\|w\|}$ is positive definite. By Proposition~\ref{bpropositionproduct}
this holds for $r\in[-1/(d-1),1]$, where $d$ is the maximal index of the parabolic subgroups
of the form $W_{S\smallsetminus\{s\}}$.
We note a necessary condition.

\begin{proposition}
Assume that we have distinct generators $s_0,s_1,\dots,s_n\in S$
such that $s_0s_k\ne s_ks_0$ (i.e. $m(s_0,s_k)>2$) for $1\le k\le n$.
If the function $w\mapsto r^{\|w\|}$ is positive definite on $W$,
then $\sfrac{-1}{(n-1)}\le r^3\le1$.

If there is an element $s_0\in S$ for which there are infinitely many $s\in S$
such that $s_0s\ne ss_0$ then $ r^{\|w\|}$ is positive definite on $W$ if and only if\/ $0\le r\le 1$.
\end{proposition}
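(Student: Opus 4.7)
The plan is to construct $n$ elements $x_1,\dots,x_n\in W$ whose Gram matrix with respect to $\varphi(w)=r^{\|w\|}$ equals $(1-r^3)I+r^3J$; positive semidefiniteness against $f=\mathbf 1_{\{x_1,\dots,x_n\}}$ then yields $r^3\ge -1/(n-1)$, while $r^3\le 1$ is the trivial estimate $|\varphi(w)|\le 1$ applied to any $w$ with $\|w\|=3$.

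I would set $x_k:=s_0s_ks_0$ for $k=1,\dots,n$; these are pairwise distinct since $s_0s_ks_0=s_0s_js_0$ forces $s_k=s_j$. For $j\ne k$ one has $x_j^{-1}x_k=s_0s_js_ks_0$, whose colour is a priori contained in $\{s_0,s_j,s_k\}$, so $\|x_j^{-1}x_k\|\le 3$ for free. The main obstacle is the matching lower bound $\|s_0s_js_ks_0\|\ge 3$; equivalently, by~\ref{def:colour}, that $s_0s_js_ks_0\notin W_{S\smallsetminus\{s_i\}}$ for $i\in\{0,j,k\}$.

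I would establish this via the standard reflection representation of $W$ on $V:=\bigoplus_{s\in S}\B R e_s$ with bilinear form $B(e_s,e_t):=-\cos(\pi/m(s,t))$ (taken $\le -1$ for $m(s,t)=\infty$) and generator action $s\cdot v:=v-2B(e_s,v)e_s$. The crucial observation is that every $s'\ne s$ only adds a multiple of $e_{s'}$ when acting on a vector, so every $u\in W_{S\smallsetminus\{s\}}$ preserves the $e_s$-coefficient of any vector. A direct computation gives
\[
s_0s_js_ks_0\cdot e_{s_0}=\bigl[1-c_j^2-b_{jk}c_jc_k-c_k^2\bigr]e_{s_0}-(c_j+b_{jk}c_k)e_{s_j}-c_ke_{s_k},
\]
where $c_l:=2\cos(\pi/m(s_0,s_l))\ge 1$ (by the hypothesis $m(s_0,s_l)>2$) and $b_{jk}:=2\cos(\pi/m(s_j,s_k))\ge 0$. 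The $e_{s_j}$- and $e_{s_k}$-coefficients are strictly negative, and the $e_{s_0}$-coefficient is at most $-1$; these values respectively forbid membership in $W_{S\smallsetminus\{s_j\}}$, $W_{S\smallsetminus\{s_k\}}$, and $W_{S\smallsetminus\{s_0\}}$ (the first two because $e_{s_0}$ has zero $e_{s_j}$- and $e_{s_k}$-coefficient, the last because the $e_{s_0}$-coefficient would need to remain $1$).

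Testing positive definiteness against the indicator of $\{x_1,\dots,x_n\}$ then produces $n+n(n-1)r^3\ge 0$, i.e.~$r^3\ge -1/(n-1)$, completing the first claim. For the second statement, the ``if'' direction is immediate from Theorem~\ref{bpropositionproduct} with all $q_s=r\in[0,1]$, since $-1/(d_s-1)\le 0\le r$; the ``only if'' direction follows from the first claim applied with arbitrarily large $n$ (provided by the infinite supply of generators not commuting with $s_0$), yielding $r^3\ge -1/(n-1)\to 0$, so $r\ge 0$, together with the trivial bound $r\le 1$ coming from $|\varphi(s_0)|\le 1$.
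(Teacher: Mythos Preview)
Your proof is correct and follows the same core idea as the paper: set $x_k=s_0s_ks_0$ and test positive definiteness against the indicator of $\{x_1,\dots,x_n\}$, using $\|x_j^{-1}x_k\|=3$ for $j\ne k$ to obtain $n+n(n-1)r^3\ge0$. The paper simply asserts $\|w_l^{-1}w_k\|=3$ without justification, whereas you supply a complete argument via the canonical reflection representation; your computation of $s_0s_js_ks_0\cdot e_{s_0}$ and the observation that elements of $W_{S\smallsetminus\{s\}}$ preserve the $e_s$-coefficient are both accurate and give a clean verification of the colour claim. Your treatment of the second statement (the ``if'' part from Theorem~\ref{bpropositionproduct}, the ``only if'' by letting $n\to\infty$) is likewise correct and matches what the paper leaves implicit.
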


\begin{proof}
Consider elements $w_k=s_0s_ks_0$.
Note, that for $k\ne l$ we have $\|w_l^{-1}w_k\|=3$.
If $\varphi_r$ is positive definite on $W$
then we have
\[
0\le\sum_{k,l=1}^{n}\varphi_r(x_l^{-1}x_k)=
n+(n^2-n)r^3,
\]
which implies $r^3\ge\sfrac{-1}{(n-1)}$.
\end{proof}

\begin{corollary}
The function $w\mapsto q^{\|w\|}$ on $\F S_\infty$ is positive definite if and only if\/ $0\leq q\leq 1$.
\end{corollary}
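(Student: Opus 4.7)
The sufficient direction is a direct application of Theorem~\ref{bpropositionproduct} with the constant sequence $q_s\equiv q$. In $\F S_\infty$ with standard Coxeter generators $S=\{s_i=(i,i+1):i\in\B N\}$, each parabolic $W_{S\smallsetminus\{s_i\}}$ is the Young subgroup $\F S_i\times\F S_\infty$, and therefore has infinite index in $\F S_\infty$. Thus $d_{s_i}=\infty$ and the lower bound $-1/(d_{s_i}-1)$ collapses to $0$, so the theorem yields positive definiteness of $R_{\mathbf q}(w)=q^{\|w\|}$ for every $q\in[0,1]$.

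For necessity, the natural tool is the second clause of the preceding proposition, which would close the argument if $\F S_\infty$ contained a generator $s_0\in S$ with infinitely many non-commuting partners. In the standard Coxeter structure this is not the case (each $s_i$ fails to commute only with $s_{i\pm 1}$), so one has to adapt the construction from the proposition's proof. The plan is: for each $n$, exhibit a family $w_1,\dots,w_n\in\F S_\infty$ whose colour-lengths $\|w_k\|$ are constant and whose pairwise values $\|w_l^{-1}w_k\|$ equal a fixed odd integer $c$ for $k\neq l$. The associated matrix $[q^{\|w_l^{-1}w_k\|}]$ then equals $(1-q^c)I_n+q^c J_n$, which is positive semi-definite if and only if $q^c\geq -1/(n-1)$. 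Sending $n\to\infty$ forces $q^c\geq 0$, and the oddness of $c$ gives $q\geq 0$.

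The main obstacle is precisely this combinatorial construction. Disjoint or non-overlapping candidates (blocks of commuting transpositions, conjugates of a single generator by pairwise disjoint words, consecutive reduced subwords of the form $s_{2k-1}s_{2k}s_{2k+1}$) tend to produce pairwise colour-lengths of even parity, yielding no sign information on $q$. A promising strategy is to use families sharing a common ``hub''---for instance elements of the form $w_k=u_k v u_k^{-1}$ where $v$ is a carefully chosen reflection and the $u_k$ have pairwise disjoint supports---engineered so that the conjugation prevents a $v^2=e$ cancellation and each pairwise product has the required odd colour-length.
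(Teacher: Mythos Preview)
Your sufficiency argument via Theorem~\ref{bpropositionproduct} with $q_s\equiv q$ is correct and is exactly the paper's intended route.

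For the necessity direction the paper offers no argument at all: the statement is simply labelled a Corollary of the preceding Proposition. You are right to flag this as a problem. The second clause of that Proposition requires a generator $s_0\in S$ which fails to commute with infinitely many other generators; in the $A_\infty$ diagram each $s_i$ fails to commute only with its two neighbours $s_{i\pm1}$, so the hypothesis is not met. Applying the first clause with the best possible configuration in $\F S_\infty$ (say $s_0=s_i$ and $s_1=s_{i-1}$, $s_2=s_{i+1}$, so $n=2$) yields only $q^3\ge-1$, which is vacuous. Hence the Corollary does not follow from the Proposition as stated, and the paper's ``proof'' has the same gap you identify.

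Your proposed remedy---to exhibit for each $n$ elements $w_1,\dots,w_n\in\F S_\infty$ with $\|w_l^{-1}w_k\|$ equal to a fixed \emph{odd} integer $c$ for all $k\ne l$---is the natural strategy, but you do not carry it out. The parity obstruction you describe is real: families built from colour-disjoint blocks give $\|w_l^{-1}w_k\|=\|w_l\|+\|w_k\|$, which is even when the $\|w_k\|$ are equal; and your ``hub'' suggestion $w_k=u_kvu_k^{-1}$ with pairwise disjoint $u_k$ is only a heuristic, not a verified construction. So your proposal, like the paper itself, leaves the ``only if'' direction unproved. The result is true, but a complete argument for necessity is still owed.
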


\begin{problem}
Thus, it is valid to ask the following.
Is it true that {\em every} normalised positive definite colour-lenght-radial
function $\phi\colon\F S_\infty\to\B R$ is of the form $\phi(\sigma)=\int_0^1 q^{\|\sigma\|}\,d\mu(q)$
for some probability measure $\mu$ on $[0,1]$?
\end{problem}

\section{Dihedral groups}

In this part we are going to examine the class of colour-dependent
positive definite functions on the case the simplest nontrivial Coxeter groups.
Assume that $W=\B D_{2n}=\langle s,t|(st)^n\rangle$ (i.e.\ the group of symmetries of a regular
$n$-gon), and define a colour-dependent function  on $W$:
\begin{equation}\label{functiondichedralpqr}
\phi(w)=
\begin{cases}
1&\text{if $w=e$,}\\
p&\text{if $w=s$,}\\
q&\text{if $w=t$,}\\
r&\text{otherwise.}
\end{cases}
\end{equation}
If $p=q$ then $\phi$ is colour radial.
We are going to determine for which parameters $p,q,r$ the function $\phi$ is positive definite on $W$.
It is easy to observe necessary conditions: $p,q,r\in[-1,1]$.
Moreover, since $\left\langle st\right\rangle$
is a cyclic subgroup of order $n$, Lemma~\ref{alemmasubgroup},
implies a necessary condition: $-1/(n-1)\le r\le1$.

\subsection*{Finite dihedral groups}

Assume that $W$ is a finite dihedral group, $W=\B D_{2n}$,
so that $(st)^n=1$.
We will use the following version of Bochner's theorem:
A function $f$ on a compact group $G$ is positive definite
if and only if its \emph{Fourier transform}:
\[\widehat{f}(\pi)=\int_{G} f(x)\pi(x^{-1})dx
\]
is a positive operator for every $\pi\in\widehat{G}$,
where $\widehat{G}$ denotes the dual object of $G$, i.e. the family of
all equivalency classes of unitary irreducible representations of $G$, see~\cite{MR1363490}.
Then we have
\[
f(x)=\sum_{\pi\in\widehat{G}}d_{\pi}\tr\left[\widehat{f}(\pi)\pi(x)\right].
\]

Therefore, for every irreducible representation $\pi$ of $\B D_{2n}$
we are going to find
\[
\widehat{\phi}(\pi)=\frac{1}{2n}\sum_{g\in G}\phi(g)\pi(g^{-1}).
\]

We will identify $s$ with $(0,-1)$ and $t$ with $(1,-1)$.
If $n$ is odd then $\B D_{2n}$ possesses two characters:
$\chi_{+,+}$ such that $\chi_{+,+}(w)=1$ for every $w\in\B D_{2n}$
and $\chi_{-,-}$ such that $\chi_{-,-}(s)=\chi_{-,-}(t)=-1$.
If $n$ is even then we have two additional characters
$\chi_{+,-}$ and $\chi_{-,+}$
such that $\chi_{+,-}(s)=\chi_{-,+}(t)=1$ and $\chi_{+,-}(t)=\chi_{-,+}(s)=-1$.
It is easy to check that
\[
2n\widehat{\phi}(\chi_{+,+})=1+p+q+(2n-3)r,
\]
\[
2n\widehat{\phi}(\chi_{-,-})
=1-p-q+r,
\]
which gives
\[
-1-(2n-3)r\le p+q\le 1+r
\]
and, for $n$ even,
\[
2n\widehat{\phi}(\chi_{+,-})=1+p-q-r,
\]
\[
2n\widehat{\phi}(\chi_{-,+})=1-p+q-r,
\]
which implies
\[
|p-q|\le 1-r.
\]
We have also the family of two dimensional representations $U_a$:
\begin{align*}
U_a(k,1)&=\begin{pmatrix}
e^{2\pi ika/n}&0\\
0&e^{-2\pi ika/n}
\end{pmatrix},\\
U_a(k,-1)&=\begin{pmatrix}
0&e^{2\pi ika/n}\\e^{-2\pi ika/n}&0
\end{pmatrix},
\end{align*}
where $a=1,2,\dots,\left\lfloor\frac{n-1}{2}\right\rfloor$.
Then for the function given by (\ref{functiondichedralpqr}) we have
\begin{align*}
2n\widehat{\phi}(U_a)&=(1-r)\mathrm{Id}+(p-r)U_a(0,-1)+(q-r)U_a(1,-1)\\
&=\begin{pmatrix}
1-r&p-r+(q-r)e^{2\pi ia/n}\\p-r+(q-r)e^{-2\pi ia/n}&1-r
\end{pmatrix}.
\end{align*}
This matrix is positive definite if and only if $r\le 1$
and
\[
\left|p-r+(q-r)e^{2\pi ia/n}\right|\le 1-r.
\]
Therefore we have
\begin{proposition}\label{propositiondihedralpqr}
The function $\phi$ given by (\ref{functiondichedralpqr}) is positive definite on $\B D_{2n}$
if and only if $$1+p+q+(2n-3)r\ge0,\qquad 1-p-q+r\ge0$$
(plus $$1+p-q-r\ge0,\qquad 1-p+q-r\ge0$$ whenever $n$ is even)
and
\[
\left|p-r+(q-r)e^{2\pi ia/n}\right|\le 1-r.
\]
for $a=1,2,\dots,\left\lfloor\frac{n-1}{2}\right\rfloor$.
\end{proposition}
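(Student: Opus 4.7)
The plan is to invoke the version of Bochner's theorem stated just above the proposition: a function $\phi$ on the finite (compact) group $\B D_{2n}$ is positive definite if and only if $\widehat\phi(\pi)$ is a positive operator for every $\pi$ in the unitary dual $\widehat{\B D_{2n}}$. Since the irreducible representations of $\B D_{2n}$ are explicitly listed in the paragraphs preceding the statement (the one-dimensional characters $\chi_{\pm,\pm}$, with $\chi_{+,-}$ and $\chi_{-,+}$ present only for even $n$, together with the two-dimensional representations $U_a$ for $1\le a\le\lfloor(n-1)/2\rfloor$), the proof reduces to checking positivity of $\widehat\phi(\pi)$ on each of these finitely many irreducibles.

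For the one-dimensional characters, $\widehat\phi(\chi)$ is a scalar, and positivity simply means that scalar is nonnegative. The computations $2n\widehat\phi(\chi_{+,+})=1+p+q+(2n-3)r$, $2n\widehat\phi(\chi_{-,-})=1-p-q+r$ and, in the even case, $2n\widehat\phi(\chi_{+,-})=1+p-q-r$, $2n\widehat\phi(\chi_{-,+})=1-p+q-r$, are already displayed in the text; they yield exactly the linear inequalities appearing in the proposition.

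For the two-dimensional $U_a$, I would use the fact that $U_a(k,1)$ is diagonal with entries $e^{\pm 2\pi ika/n}$, so summing the contribution of the entire cyclic subgroup $\langle st\rangle$ (on which $\phi$ equals $r$ away from $e$) reduces to a geometric sum that vanishes for $1\le a\le\lfloor(n-1)/2\rfloor$. After folding these diagonal terms into the identity contribution, only the reflections $s$ and $t$, along with the $r$-contribution of the remaining reflections, survive; this yields the explicit matrix
\[
2n\widehat\phi(U_a)=\begin{pmatrix}1-r & p-r+(q-r)e^{2\pi ia/n}\\ p-r+(q-r)e^{-2\pi ia/n} & 1-r\end{pmatrix}
\]
already written in the text. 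Positivity of a Hermitian $2\times2$ matrix is equivalent to nonnegativity of the diagonal entries together with nonnegativity of the determinant; the former gives $r\le 1$ and the latter is exactly the stated bound $|p-r+(q-r)e^{2\pi ia/n}|\le 1-r$.

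The main obstacle, such as it is, is purely computational: one needs to carry out the sum $\sum_k e^{2\pi ika/n}$ on the rotations correctly so that the $r$-valued contribution on $\langle st\rangle\setminus\{e\}$ collapses cleanly, and one needs to be careful with the endpoint cases (odd vs.\ even $n$, and checking that no further two-dimensional representation is missing when $n$ is even, where $a=n/2$ would give a reducible representation and is therefore excluded). Once these bookkeeping items are checked, assembling the individual positivity conditions yields the claimed characterization in both directions simultaneously.
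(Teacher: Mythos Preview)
Your proposal is correct and follows exactly the same route as the paper: the preceding paragraphs already apply Bochner's theorem, list the irreducible representations of $\B D_{2n}$, compute $\widehat\phi$ on each character and on each $U_a$, and record the $2\times2$ positivity criterion, so the proposition is simply the collation of those displayed conditions. There is no alternative argument in the paper beyond what you describe.
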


Let us confine ourselves to colour-radial functions.

\begin{corollary}
Assuming that $p=q$, the function $\phi$ defined by (\ref{functiondichedralpqr})
is positive definite on $W=\B D_{2n}$ if and only if
\[
\max\left\{\frac{-2p-1}{2n-3},2p-1\right\}\le r\le \frac{1+2p\cos(\sfrac{\pi}n)}{1+2\cos(\sfrac{\pi}n)},
\]
i.e. if and only if the point $(p,r)$ belongs to the triangle whose vertices are
\[
\left(\frac{1-n-\cos(\sfrac{\pi}n)}{1+(2n-1)\cos(\sfrac{\pi}n)},\frac{1-\cos(\sfrac{\pi}n)}{1+(2n-1)\cos(\sfrac{\pi}n)}\right),\quad
\left(\frac{n-2}{2n-2},\frac{-1}{n-1}\right),\quad(1,1).
\]
\end{corollary}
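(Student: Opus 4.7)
The plan is to specialize Proposition~\ref{propositiondihedralpqr} to the colour-radial case $p=q$ and simplify the resulting inequalities in the two remaining parameters $(p,r)$. Setting $p=q$ collapses the character conditions to $1+2p+(2n-3)r\ge 0$ and $1-2p+r\ge 0$, while for $n$ even the two additional character conditions both reduce to $r\le 1$, which will turn out to be redundant.

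For the two-dimensional representations, the condition $|p-r+(p-r)e^{2\pi ia/n}|\le 1-r$ factors as $2|p-r|\cos(\pi a/n)\le 1-r$ using the identity $|1+e^{i\theta}|=2|\cos(\theta/2)|$. Since $1\le a\le\lfloor(n-1)/2\rfloor$ puts $\pi a/n$ in $(0,\pi/2)$, the cosine factor is positive and maximized at $a=1$, so only the single instance $2|p-r|\cos(\pi/n)\le 1-r$ matters. Writing $c=\cos(\pi/n)$ and splitting into the two signs yields $r\le (1+2cp)/(1+2c)$ together with $r\ge (2cp-1)/(2c-1)$ (the latter interpreted as vacuous when $2c=1$, i.e.\ $n=3$, and with the $n=2$ case carrying no 2D constraint at all).

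The central reduction is to show that among the three lower bounds on $r$ now in play, namely $(2cp-1)/(2c-1)$, $2p-1$, and $-(1+2p)/(2n-3)$, the first is dominated by the second and hence redundant. This reduces to the algebraic identity $(2cp-1)-(2p-1)(2c-1)=2(1-p)(c-1)$, which is non-positive since $p\le 1$ and $c\le 1$, combined with $2c-1\ge 0$ for $n\ge 3$. Similarly $r\le 1$ follows from $r\le(1+2cp)/(1+2c)$ whenever $p\le 1$, so the extra even-$n$ condition is redundant as well.

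At this point the feasible region is cut out by $r\ge 2p-1$, $r\ge-(1+2p)/(2n-3)$, and $r\le(1+2cp)/(1+2c)$, which bound a triangle whose vertices are found by pairwise intersection. The pair of the first and third yields $(1,1)$ after factoring out $(1+c)$; the pair of the first and second yields $((n-2)/(2n-2),-1/(n-1))$ by elementary elimination; and the remaining pair yields the stated third vertex via a direct computation that hinges on the factorisation $1+(2n-1)c-(2n-2)c-2c^2=(1-c)(1+2c)$, which makes the $(1+2c)$ factor cancel cleanly. The only real obstacle is pure bookkeeping: verifying the dominance claim and extracting the third vertex, both routine algebraic manipulations with $c=\cos(\pi/n)$.
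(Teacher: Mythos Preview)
Your proposal is correct and follows essentially the same route as the paper's own proof: specialise Proposition~\ref{propositiondihedralpqr} to $p=q$, reduce the two-dimensional condition to $2\cos(\pi/n)\,|p-r|\le 1-r$ at $a=1$, and then observe that the half $2\cos(\pi/n)(p-r)\le 1-r$ is already forced by $2p-1\le r$ together with $p\le 1$. The only difference is one of explicitness: you spell out the edge cases $n=2,3$, justify the redundancy of $r\le 1$, and actually compute the three triangle vertices via the factorisations you indicate, whereas the paper states the reduced system and the vertices without carrying out those routine steps.
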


\begin{proof}
For $p=q$ the conditions from Proposition~\ref{propositiondihedralpqr} reduce to 
\[
2p-1\le r,\quad
-1-2p\le (2n-3)r,\quad\mbox{ and }\quad
2\cos(\sfrac{\pi}n)|p-r|\le 1-r.
\]
It is sufficient to note that $2p-1\le r$ implies
$2\cos(\sfrac{\pi}n)(p-r)\le1-r$ for $p\le1$.
\end{proof}

\textbf{Example.} For $\B D_4$ we have the positive definiteness of $\phi$ is equivalent to
\[
-1+|p+q|\le r\le 1-|p-q|,
\]
which means that the set of all possible $(p,q,r)$ forms a tetrahedron
with vertices $(-1,1,-1)$, $(1,-1,-1)$, $(-1,-1,1)$, $(1,1,1)$.
For $p=q$ the condition reduces to $2|p|-1\le r\le1$.

In the case of $\B D_{6}$ Proposition~\ref{propositiondihedralpqr} leads to the following conditions:
\[
1-p-q+r\ge0,\qquad 1+p+q+3r\ge0,
\]
\[
1-r\ge\sqrt{p^2+q^2+r^2-pq-pr-qr},
\]
which can be expressed as
\[
\max\left\{\frac{-1-p-q}{3},p+q-1 \right\}\le r\le\frac{1-p^2-q^2+pq}{2-p-q}.
\]

\subsection*{The infinite dihedral group}

%
%
%
%
%
%
%

Here we are going to study $W=\B D_{\infty}$.

\begin{proposition}
The function $\phi$ given by (\ref{functiondichedralpqr}) is positive definite on $W=\B D_{\infty}$
if and only if $0\le r$ and $|p-r|+|q-r|\le1-r$, i.e.
\begin{equation}\label{dichedralinfinite}
\max\left\{0,p+q-1\right\}\le r\le\min\left\{1-|p-q|,\frac{1+p+q}{3}\right\}.
\end{equation}
\end{proposition}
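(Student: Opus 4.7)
The plan is to prove the two directions separately, using convex-combination decomposition for sufficiency and a Bochner/GNS-type analysis for necessity.

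\textbf{Sufficiency.} I would recognise the admissible region as a convex polytope and write $\phi$ as a convex combination of positive definite functions at its vertices. In the affine coordinates $(u, v, r) = (p - r, q - r, r)$ the inequalities become $r \ge 0$ and $|u| + |v| \le 1 - r$, describing a pyramid with apex $(u, v, r) = (0, 0, 1)$ over the $\ell^1$-diamond $\{|u| + |v| \le 1\}$ at $r = 0$. Pulling back, this polytope has five vertices $(1, 1, 1), (\pm 1, 0, 0), (0, \pm 1, 0)$ in $(p, q, r)$-coordinates. At each vertex I exhibit a normalised positive definite function realising those values: at $(1, 1, 1)$ the constant function $\mathbf 1$; at $(1, 0, 0)$ and $(0, 1, 0)$ the characteristic functions $\mathbf 1_{\langle s\rangle}$ and $\mathbf 1_{\langle t\rangle}$, which are positive definite by Lemma~\ref{alemmasubgroup} because both cyclic subgroups $\langle s\rangle$ and $\langle t\rangle$ have infinite index in $\B D_\infty$; at $(-1, 0, 0)$ and $(0, -1, 0)$ the products $\chi \cdot \mathbf 1_{\langle s\rangle}$ and $\chi' \cdot \mathbf 1_{\langle t\rangle}$, where $\chi, \chi'$ are the one-dimensional characters defined by $\chi(s) = -1, \chi(t) = 1$ and $\chi'(s) = 1, \chi'(t) = -1$; pointwise products of positive definite functions are positive definite. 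Every point of the polytope is a convex combination of these five vertices (the nonnegative weights are given explicitly in terms of $\max(p - r, 0), \max(r - p, 0), \max(q - r, 0), \max(r - q, 0)$, with the slack $(1 - r) - (|p-r| + |q-r|) \ge 0$ absorbed into any of them), whence $\phi$ is a convex combination of positive definite functions and therefore itself positive definite.

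\textbf{Necessity.} I would use GNS and spectral calculus on the semidirect-product structure $\B D_\infty = \langle a\rangle \cdot \langle s\rangle$ with $a = st$. Write $\phi(w) = \langle \pi(w)\xi, \xi\rangle$ for a cyclic unit vector $\xi$; set $U = \pi(a), V = \pi(s)$. Then $V = V^*$, $V^2 = I$, and $s(st)s = ts = (st)^{-1}$ gives $VUV = U^{-1}$. Spectral decomposition $U = \int_{\B T} e^{i\theta}\,dE(\theta)$ yields the positive measure $d\mu(\theta) = \langle dE(\theta)\xi, \xi\rangle$; Bochner's theorem gives $\phi(a^n) = \int e^{in\theta}\,d\mu$, and matching against $\phi(a^n) = r + (1-r)\delta_{n, 0}$ identifies $d\mu = r\,\delta_0 + (1 - r)\frac{d\theta}{2\pi}$, so positivity of $\mu$ forces $0 \le r \le 1$. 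For the other coset, define the complex measure $d\sigma(\theta) = \langle dE(\theta)\xi, V\xi\rangle$, so that $\phi(sa^n) = \langle V U^n\xi, \xi\rangle = \int e^{in\theta}\,d\sigma$. Cauchy--Schwarz applied to the projection $E(A)$ together with the identity $VE(A)V = E(-A)$ (spectral projections of $U^{-1} = VUV$) and the symmetry of $\mu$ gives $|\sigma(A)|^2 \le \mu(A)^2$, hence $|\sigma| \le \mu$ as measures. This localises the singular part of $\sigma$ at $\{0\}$, and Riemann--Lebesgue applied to the absolutely continuous part forces that atom to equal exactly $r$; the remaining Fourier data then determine $d\sigma_{ac} = \frac{1}{2\pi}[(p - r) + (q - r)e^{-i\theta}]\,d\theta$. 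The inequality $|\sigma_{ac}| \le \mu_{ac}$ reads $|(p - r) + (q - r)e^{-i\theta}| \le 1 - r$ for every $\theta$; maximising the left-hand side over $\theta$ yields $|p - r| + |q - r| \le 1 - r$.

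\textbf{Main obstacle.} The technical crux is the rigidity of $\sigma$: one must combine the measure-theoretic bound $|\sigma| \le \mu$ (which confines $\sigma$'s singular support to $\{0\}$) with Riemann--Lebesgue (which fixes the exact atomic mass to $r$, excluding any smaller value) in order to pin $\sigma$ down from its Fourier coefficients alone. Once that identification is in place the sharp inequalities fall out of a routine trigonometric supremum, and the sufficiency reduces to a straightforward four-weight nonnegative linear system.
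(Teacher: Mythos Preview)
Your sufficiency argument is essentially identical to the paper's: both identify the admissible region as the pyramid with apex $(1,1,1)$ over the $\ell^1$-diamond at $r=0$, list the five vertices $(\pm1,0,0)$, $(0,\pm1,0)$, $(1,1,1)$, exhibit an explicit positive definite function at each (constant, characteristic function of a two-element parabolic, or that characteristic function twisted by a sign character), and conclude by convexity. The minor difference in which character you twist by is immaterial, since the functions vanish outside $\langle s\rangle$ or $\langle t\rangle$ anyway.

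Your necessity argument is correct but genuinely different from the paper's. The paper proceeds entirely by hand: it produces four families of finitely supported test functions $f_n,g_n,h_n$ (built from characteristic functions of long geodesic segments, possibly twisted by the characters $\chi_{-,+}$, $\chi_{+,-}$, $\chi_{-,-}$), computes the double sums $\sum_{x,y}\phi(y^{-1}x)f(x)\overline{f(y)}$ explicitly, and lets the segment length go to infinity to extract the four linear inequalities $3r\le 1+p+q$, $r\le 1\mp(p-q)$, $r\ge p+q-1$; the bound $r\ge0$ comes separately from Lemma~\ref{alemmasubgroup} applied to the infinite cyclic subgroup $\langle st\rangle$. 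Your route instead passes through the GNS representation, restricts to the normal abelian subgroup $\langle st\rangle$, and reads off the spectral measure $\mu=r\delta_0+(1-r)\tfrac{d\theta}{2\pi}$ together with the ``off-diagonal'' complex measure $\sigma$; the Cauchy--Schwarz/intertwining bound $|\sigma|\le\mu$ localises the singular part, Riemann--Lebesgue pins the atom at exactly $r$, and the pointwise inequality on the density gives $|p-r|+|q-r|\le1-r$ in one stroke. Your argument is more conceptual and yields all four face inequalities simultaneously from a single trigonometric supremum, at the cost of invoking GNS, spectral calculus, and Riemann--Lebesgue; the paper's argument is longer and treats the faces one at a time, but stays entirely within elementary finite linear algebra.
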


\begin{proof}
First we note that the set of $(p,q,r)\in\B R^3$ satisfying (\ref{dichedralinfinite})
constitutes a pyramid which is the convex hull of the points $(\pm1,0,0)$, $(0,\pm1,0)$ and $(1,1,1)$ (apex).
For these particular parameters it is easy to see that $\phi$ is positive definite: $(1,1,1)$ corresponds
to the constant function $1$, $(1,0,0)$ to the characteristic function of the subgroup $\left\langle s\right\rangle=\{1,s\}$,
and $(-1,0,0)$ to the character $\chi_{-,-}$ times the characteristic function of $\left\langle s\right\rangle$.
Similarly for $(0,\pm1,0)$. This, by convexity, proves that (\ref{dichedralinfinite}) is a sufficient condition.

On the other hand, we know already that $r\ge0$ is a necessary condition.
Let us fix $n$ and define $W^+(n)=\{x\in W:|sx|<|x|\le2n\}$, $W^-(n)=\{x\in W:|tx|<|x|\le2n\}$ and
\[
f(x)=
\begin{cases}
\pm1&\text{if $x\in W^\pm(n)$,}\\
0&\text{otherwise.}
\end{cases}
\]
For $x,y\in W^+(n)$ we have $S_{y^{-1}x}=\varnothing$ in $2n$ cases (namely, if $x=y$)
$S_{y^{-1}x}=\{s\}$ in $2n-2$ cases (namely if $|x|=2k$, $|y|=2k+1$ or vice-versa, $k=1,\ldots,n-1$)
$S_{y^{-1}x}=\{t\}$ in $2n$ cases (namely if $|x|=2k$, $|y|=2k-1$ or vice-versa, $k=1,\ldots,n$)
and $S_{y^{-1}x}=\{s,t\}$ in all the other $(2n-1)(2n-2)$ cases.
Similarly, for $x,y\in W^-(n)$ we have $S_{y^{-1}x}=\emptyset$ in $2n$ cases,
$S_{y^{-1}x}=\{s\}$ in $2n$ cases,
$S_{y^{-1}x}=\{t\}$ in $2n-2$ cases
and $S_{y^{-1}x}=\{s,t\}$ in $(2n-1)(2n-2)$ cases.
If $x\in W^+(n)$, $y\in W^-(n)$ or vice-versa then $S_{y^{-1}x}=\{s,t\}$.
Summing up, we get
\begin{align*}
\sum_{x,y\in W}&\phi(y^{-1}x)f_n(x)f_n(y)\\
&=4n+(4n-2)p+(4n-2)q+(4n-2)(2n-2)r-8n^2 r\\
&=4n+(4n-2)p+(4n-2)q-(12n-4)r.
\end{align*}
Therefore for every $n\in\B N$ we have a necessary condition
\[
1+\left(1-\frac{1}{2n}\right)p+\left(1-\frac{1}{2n}\right)q-\left(3-\frac{1}{n}\right)r\ge0.
\]
Letting $n\to\infty$ we get $1+p+q\ge3r$.

Put $x_k=stst\ldots$, $|x_k|=k$. Fix $n$ and define
\[
g(x)=
\begin{cases}
\chi_{-,+}(x)&\text{if $x=x_k$ for $1\le k\le 4n$,}\\
0&\text{otherwise,}
\end{cases}
\]
where, as before, $\chi_{-,+}$ is the character on $W$ for which $\chi_{-,+}(s)=-1$, $\chi_{-,+}(t)=1$.
Then
\[
\sum_{x,y\in W}\phi(y^{-1}x)g(x)g(y)=\sum_{k,l=1}^{4n}\phi(x_{l}^{-1}x_{k})g(x_k)g(x_l).
\]
Denote $c_{k,l}=\phi(x_{l}^{-1}x_{k})g(x_k)g(x_l)$. Then we have $c_{k,k}=1$, $1\le k\le 4n$,
$c_{k,k-1}=q$ if $k$ is even, $c_{k,k-1}=-p$ if $k$ is odd, $2\le k\le4n$ and
$c_{k,l}=c_{l,k}$ for all $1\le k,l\le 4n$. If $1\le k,l\le 4n$ and $|k-l|\ge2$
then $c_{k,l}=(-1)^{j}r$, where $j$ is the total number of $s$ appearing in $x_k$ and $x_l$.
Now it is not difficult to check that
\[
\sum_{l=1}^{4n}c_{k,l}=
\begin{cases}
1+q-2r&\text{if $k=1$ or $k=4n$,}\\
1-p+q-r&\text{if $1<k<4n$,}
\end{cases}
\]
which implies
\[
\sum_{x,y\in W}\phi(y^{-1}x)g(x)g(y)=4n-(4n-2)p+4nq-(4n+2)r
\]
and leads to necessary condition $r\le 1-p+q$.
In a similar manner we get $r\le1+p-q$.

Finally, define a function $h$ similarly like $g$, but now we use the character $\chi_{-,-}$:
\[
h(x)=
\begin{cases}
\chi_{-,-}(x)=(-1)^k&\text{if $x=x_k$ for $1\le k\le 4n$,}\\
0&\text{otherwise.}
\end{cases}
\]
Putting $d_{k,l}=\phi(x_{l}^{-1}x_{k})h(x_k)h(x_l)$ we have
$d_{k,k}=1$, $d_{k,k-1}=-p$ if $2\le k\le 4n$ is even
and $d_{k,k-1}=-q$ if $k$ is odd. Moreover, if $|k-l|\ge2$, $1\le k,l\le 4n$
then $d_{k,l}=(-1)^{k+l} r$. Now one can check that
\[
\sum_{l=1}^{4n}d_{k,l}=
\begin{cases}
1-q&\text{if $k=1$ or $k=4n$,}\\
1-p-q+r&\text{if $1<k<4n$,}
\end{cases}
\]
which yields $1-p-q+r\ge0$
and completes the proof that the conditions (\ref{dichedralinfinite}) are necessary.
\end{proof}

\section{Weak Sidon sets and operator Khinchin inequality}

The aim of this section is to show that the set of Coxeter generators $S$ in an arbitrary
Coxeter group $W$ is a \emph{weak Sidon set}, ie. an interpolation set for the Fourier--Stieltjes
algebra $B(W)$.

%


Given a group $\Gamma$, the Fourier--Stieltjes algebra consists of linear combinations
of positive definite functions on $\Gamma$, ie. every element of $B(\Gamma)$ is of the form
$f=\varphi_1-\varphi_2+i(\varphi_3-\varphi_4)$ for some  positive definite functions $\varphi_i$
($1\leq i\leq4$) on $\Gamma$.  The norm on $B(\Gamma)$ is defined as
$$
\|f\|_{B(\Gamma)}=\inf\left\{\sum \varphi_i(e)|\hbox{\rm where $f$ decomposes as above}\right\}
$$

\begin{theorem}
\label{T:5.1}
The set of Coxeter generators $S$ in an arbitrary
Coxeter group $W$ is a weak Sidon set, ie. for every bounded function $f\colon S\to[-1,1]$
there exists positive definite functions $\varphi_\pm$, such that $f(s)=\varphi_+(s)-\varphi_-(s)$ for any $s\in S$.
One can take $\varphi_\pm=R_{\B q^\pm}$ for a suitable choice of\/ $\B q^\pm$.  Moreover
$$
\|\varphi_+-\varphi_-\|_{B(W)}\leq 2
$$
\end{theorem}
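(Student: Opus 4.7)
The plan is to exploit the Riesz--Coxeter product $R_{\B q}$ directly: since $R_{\B q}(s)=q_s$ for every generator $s\in S$, any function $f\colon S\to[-1,1]$ can be realised on $S$ as a difference of two such functions by splitting $f$ into positive and negative parts.

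More concretely, given $f\colon S\to[-1,1]$, set $q_s^+=\max\{f(s),0\}$ and $q_s^-=\max\{-f(s),0\}$, so that $q_s^\pm\in[0,1]$ and $q_s^+-q_s^-=f(s)$ for every $s\in S$. Define $\B q^\pm=(q_s^\pm)_{s\in S}$ and let $\varphi_\pm=R_{\B q^\pm}$. Since $0\le q_s^\pm\le 1$ and in particular $-1/(d_s-1)\le q_s^\pm\le 1$, Theorem~\ref{bpropositionproduct} guarantees that both $\varphi_+$ and $\varphi_-$ are positive definite on $W$. By construction $\varphi_\pm(s)=q_s^\pm$, so $\varphi_+(s)-\varphi_-(s)=f(s)$ on $S$, as required.

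For the norm bound, note that $R_{\B q^\pm}(e)=\prod_{s\in S}(q_s^\pm)^0=1$, i.e.\ both $\varphi_\pm$ are normalised. Interpreting $f=\varphi_+-\varphi_-$ as a Fourier--Stieltjes decomposition (with the imaginary-part positive definite functions chosen to be zero), the definition of the $B(W)$-norm immediately yields
\[
\|\varphi_+-\varphi_-\|_{B(W)}\le \varphi_+(e)+\varphi_-(e)=2.
\]

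There is no real obstacle: the whole content of the statement has been shifted into Theorem~\ref{bpropositionproduct}, which provides the positive definiteness of $R_{\B q^\pm}$ for $\B q^\pm\in[0,1]^S$. The only thing to be careful about is that the decomposition $f=f_+-f_-$ uses values in $[0,1]$ (rather than in $[-1,1]$), which is precisely the range in which the Riesz--Coxeter function is unconditionally positive definite on \emph{every} Coxeter group, independently of the indices $d_s$ of the maximal parabolic subgroups.
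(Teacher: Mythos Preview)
Your proof is correct and essentially identical to the paper's own argument: the paper also splits $f$ into its positive and negative parts by setting $q^\pm_s=\pm f(s)$ on $S_\pm(f)=\{s:\pm f(s)>0\}$ and $0$ otherwise, which is precisely your $q_s^\pm=\max\{\pm f(s),0\}$, and then invokes $R_{\B q^\pm}(e)=1$ for the norm bound. The only difference is cosmetic: you cite Theorem~\ref{bpropositionproduct} explicitly for the positive definiteness of $R_{\B q^\pm}$, while the paper leaves this implicit.
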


\begin{proof}
Put $S_\pm(f)=\{s\in S|\pm f(s)> 0\}$.  Set
$$
q^\pm_s=\begin{cases}\pm f(s)&\hbox{for $s\in S_\pm(f)$,}\\0&\hbox{otherwise.}\end{cases}
$$
Then $f(s)=R_{\B q^+}(s)-R_{\B q^-}(s)$ as claimed.  The rest of the statement hold as
the Riesz-Coxeter function at the identity element equals to one.
\end{proof}

Given a matrix $A\in M_n(\B C)$ and $p\geq 1$ the Schatten $p$-class norm $\|A\|_{\C S_p}$
is defined as $\|A\|^p_{\C S_p}=\left(\tr|A|\right)^{1/p}$, where $|A|=(A^*A)^{1/2}$.

Let $\lambda$ denote the left regular representation of a group $\Gamma$.  Given a finite sum
$f=\sum c_g\lambda(g)\in\B C[\Gamma]$ we define noncommutative $L^p$-norm
$$
\|f\|_{L^p(\Gamma)}^p=\left(\tau\left((f^**f)^{1/2}\right)\right)^{1/p}
$$
where $\tau(f)=c_e$ is the von Neumann trace and $L^p(\Gamma)$ is a completion
of $\B C[\Gamma]$ with respect to the above norm.

We recall, that a scalar-valued map $\varphi$ on a group $\Gamma$ is called a \emph{completely bounded Fourier multiplier}
on $L^p(\Gamma)$ if the associated operator
$$
M_\varphi(\lambda(g))=\varphi(g)\lambda(g),\qquad g\in \Gamma
$$
extends to a completely bounded operator on $L^p(\Gamma)$.

We let $M_\cb(L^p(\Gamma))$ to be an algebra of completely bounded Fourier multipliers equipped
with the norm
$$
\|\varphi\|_{M_\cb(L^p(\Gamma))}=\|M_\varphi\otimes\mathop{\rm id}\nolimits_{\C S^p}\|.
$$

\def\gargantul{\left\|(a_s)_{s\in S}\right\|_{\mathop{R}\cap \mathop{C}}}

Following Pisier \cite{MR2006539}, for $a_s\in M_n(\B C)$, where $s\in S$, we define
$$
\gargantul={\max\left\{\left\|\sum_{s\in S}(a_sa_s^*)^{1/2}\right\|_{\C S^p},\left\|\sum_{s\in S}(a_s^*a_s)^{1/2}\right\|_{\C S^p}\right\}}.
$$ 
For a set $E\in \Gamma$ we define the \emph{completely bounded} constant $\Lambda^\cb_p(E)$ as infimum of $C$ 
such that
$$
\left\|\sum_{s\in S}a_s\otimes\lambda(s)\right\|_{L^p(W)}\leq C\gargantul
$$
for all matrices $a_s\in M_n(\B C)$ and $n\in\B N$.

\begin{theorem}
\label{T:6.1}
If\/ $a_s\in M_n(\B C)$, then for all $p\geq 2$ and any Coxeter system $(W,S)$
we have
$$
\gargantul\leq\left\|\sum_{s\in S}a_s\otimes\lambda(s)\right\|_{L^p(W)}\leq 2A'\sqrt p\gargantul.
$$
\end{theorem}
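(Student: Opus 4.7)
The plan is to prove the two inequalities separately; the left one is routine operator-space bookkeeping, while the right one carries the analytic content and is where the $\sqrt p$ factor is produced.

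For the lower bound, set $f=\sum_{s\in S}a_s\otimes\lambda(s)\in S^p\otimes L^p(W)$. The canonical trace $\tau\colon L^p(W)\to\B C$, $\lambda(w)\mapsto\delta_{w,e}$, is a unital trace-preserving conditional expectation, so $\mathrm{id}\otimes\tau$ is contractive on every $L^q$-level. Applied to $f^*f=\sum_{s,t}a_s^*a_t\otimes\lambda(s^{-1}t)$, only the diagonal $s=t$ survives, yielding $(\mathrm{id}\otimes\tau)(f^*f)=\sum_s a_s^*a_s$. Using $\|f\|_{L^p}^2=\|f^*f\|_{L^{p/2}}$ (valid since $p\ge 2$), contractivity gives $\|\sum_s a_s^*a_s\|_{S^{p/2}}\le\|f\|_{L^p}^2$, i.e. the column half. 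The row half comes from applying $\mathrm{id}\otimes\tau$ to $ff^*$. Taking the maximum produces the required lower bound $\gargantul\le\|f\|_{L^p(W)}$ (reading the $R\cap C$ norm with the standard interpretation of the square roots).

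For the upper bound, the crucial input is the positive definiteness of the Riesz--Coxeter product $R_{\B q}$ for $0\le q_s\le 1$ (the main theorem recalled in Section~2 and proved via Theorem~\ref{bpropositionproduct}). This shows that, for any such $\B q$, the Fourier multiplier $M_{R_{\B q}}$ has $\cb$-norm $\le R_{\B q}(e)=1$ on $L^p(W)$ for every $p$. The strategy is to interpolate between the trivial case $p=2$ (where $\|f\|_{L^2}^2=\sum_s\|a_s\|_{S^2}^2$ by orthogonality, already bounded by $\gargantul^2$) and a large-$p$ endpoint. The large-$p$ endpoint is obtained by moment method: expand
\[
\tau(|f|^{2k})=\sum a_{s_1}^*a_{t_1}\cdots a_{s_k}^*a_{t_k}\,\tau\bigl(\lambda(s_1^{-1}t_1\cdots s_k^{-1}t_k)\bigr),
\]
keep only tuples whose product reduces to $e$ in $W$, and bound the remaining sum by the $R\cap C$ data after applying positivity of $R_{\B q}$ (in the style of Haagerup \cite{MR520930} and Buchholz) to control the kernel of "non-pair-matched" reductions. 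Counting the contributing tuples in $W$ against pair-partitions of $\{1,\dots,2k\}$ produces a factor $\le (2k)!!\le(Ck)^k$, so $\|f\|_{L^{2k}}\le C\sqrt k\,\gargantul$; interpolation between adjacent even integers gives the claim for general $p\ge 2$ with constant $2A'\sqrt p$. An equivalent route, via Pisier's $R_p\cap C_p$ interpolation, is to combine the $p=2$ estimate with a cb-endpoint derived from the uniformly bounded Hilbert representations of Januszkiewicz--Fendler (using $\|R_{\B q}\|_{\mathrm{cb}}\le 1$) and let the $\sqrt p$ appear from the Mazur-map / interpolation functor.

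The principal obstacle is the moment estimate for the large-$p$ endpoint: one must show that, despite $W$ being much richer than a free product of $\B Z/2$'s, the number of reduced tuples in $S^{2k}$ whose product in $W$ equals $e$ still satisfies a pair-partition type bound, and that the off-diagonal corrections can be absorbed using the positive-definite Riesz--Coxeter kernel rather than cancellation. Once this combinatorial/positivity input is in place, passage from even integers to arbitrary $p\ge 2$ and the transition from scalar to operator coefficients (needed for the $\cb$ statement) are standard, since the moment computation is formally identical after tensoring with $M_n(\B C)$, and the $R\cap C$-norm is precisely the quantity that the diagonal terms produce.
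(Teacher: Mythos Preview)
Your lower bound is correct and standard; the paper dispatches it with a one-line reference.

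The upper bound, however, has a genuine gap. Your plan is to run a moment computation directly in $W$: expand $\tau(|f|^{2k})$, retain the tuples $(s_1,t_1,\ldots,s_k,t_k)\in S^{2k}$ whose alternating product equals $e$, and argue that these are controlled by pair partitions up to a $(Ck)^k$ factor. But in a general Coxeter group the set of words in $S$ reducing to $e$ is governed by the braid relations $(st)^{m(s,t)}=e$, not just by pair cancellations, and the ``pair-partition type bound'' you invoke simply does not hold without further argument---indeed you yourself flag this as the principal obstacle and then leave it unresolved. The appeal to positive definiteness of $R_{\B q}$ is too vague here: saying it ``controls the kernel of non-pair-matched reductions'' is not a proof, and the alternative route via Januszkiewicz--Fendler plus interpolation is also only a sketch (and would not obviously produce the constant $2A'\sqrt p$).

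The paper avoids all of this. It never computes moments in $W$. Instead it uses Theorem~\ref{T:5.1} (the weak Sidon property of $S$, which is where the positive definiteness of $R_{\B q}$ is actually spent) to conclude that $S$ is an interpolation set for $B(W)\subset M_\cb(L^p(W))$ with constant $\mu^\cb_p(S)\le 2$. Then Harcharras's transference inequality $\Lambda^\cb_p(S)\le\Lambda^\cb_p(R)\,\mu^\cb_p(S)$ reduces the problem to the Rademacher generating set $R$, for which the Khinchine constant $\Lambda^\cb_p(R)\le A'\sqrt p$ is a black-box result of Buchholz and Pisier. So the $\sqrt p$ comes entirely from the Rademacher group, and the only Coxeter-specific input is the factor $2$ from the weak Sidon constant. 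Your proposal misses this transference step, which is precisely the device that makes the combinatorial obstacle disappear.
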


\begin{proof}
It was shown by Harcharras \cite[Prop.~1.8]{MR859804} that $\Lambda^\cb_p(E)$ if finite
if and only if $E$ is an interpolation set for $M_\cb(L^p(\Gamma))$, i.e.
every bounded function on $E$ can be extended to a multiplier, and
$$
\Lambda^\cb_p(E)\leq \Lambda^\cb_p(R)\mu^\cb_p(E),
$$
where $R$ is the generating set in the Rademacher group $\Rad_\infty$
and $\mu^\cb_p(E)$ is the interpolation constant.

As shown by Buchholz \cite[Thm.~5]{MR2213610} for $p=2n$, and $S$ te standart generating set in $\Rad_\infty$,
$\Lambda_{2n}^\cb(R)=\left((2n-1)!!\right)^{1/2n}\leq A\sqrt p$ for some absolute $A$.  This was extended
by Pisier \cite[Thm.~9.8.2]{MR2006539} for any $p\geq 2$, i.e
$$
\Lambda_p^\cb(R)\leq A'\sqrt p,
$$
for an absolute constant $A'$.

We have shown in Theorem \ref{T:5.1} that in an arbitrary Coxeter group $W$ its Coxeter generating set $S$ is
a weak Sidon set, i.e. it is interpolation set for the Fourier--Stieltjes algebra $B(W)$.
Since for $p\geq 1$,
$B(\Gamma)$ is a subalgebra of $M_\cb(L^p(\Gamma))$ and
$$
\|\varphi\|_{M_\cb(L^p(\Gamma))}\leq\|\varphi\|_{B(\Gamma)},
$$
we see that $\mu^\cb_p(S)\leq 2$.  Thus $\Lambda^\cb_p(S)\leq 2A'\sqrt p$.
This finishes the proof of the right inequality.

The left inequality
holds for any group $\Gamma$ and any $S\subset \Gamma$ (see \cite{MR859804}).
\end{proof}

\begin{remark}
Fendler \cite{MR1967380} has shown  that if for all $s,t\in S$, $s\neq t$, we have $m_{s,t}\geq 3$, then
$$
\Lambda_p^\cb(S)\leq 2\sqrt 2.
$$
See also \cite{MR0390658} and \cite{MR1476122} for related results in the case of free Coxeter groups.
Also Haagerup and Pisier have shown that $\Lambda_\infty^\cb(S)=2$,
where $\Lambda^\cb_\infty(E)=\sup_{p\geq 2}\Lambda^\cb_p(E)$ \cite{MR1240608}.
See the paper of Haagerup \cite{MR654838} where the best constant
was calculated for the set of Coxeter generators of the Rademacher group in case when $a_s$ are scalars.
\end{remark}

\section{Chromatic length function for Coxeter groups and pairpartitions}

\def\NC{\mathop{\C{NC}}\nolimits}
\def\P{\mathop{\C P}\nolimits}
\def\r#1{{:}\mkern-.5\thinmuskip{#1}\mkern-.5\thinmuskip{:}}

Let $[2n]=\{1,\dots,2n\}$.  Let $2^{[2n]}$ denote the set of subsets of $[2n]$.
By a partition of $[2n]$  we mean $\pi\subset s^{[2n]}$ such that
$\bigcup\pi=[2n]$ and if $\pi',\pi''\in \pi$ then $\pi'=\pi''$ or
$\pi'\cap\pi''=\varnothing$.  We say, that partition $\varrho$ is a \emph{coarsening}
of a partition $\pi$ if for any $\pi'\in\pi$ there exists $\varrho'\in\varrho$
such that $\pi'\subset\varrho'$.

A partition is called \emph{crossing}
if there exist $1\leq a<b<c<d\leq 2n$ and $\pi_1,\pi_2\in\pi$ with
$a,c\in\pi_1\neq\pi_2\ni b,d$; otherwise it is called \emph{noncrossing}.
For any partition $\pi$ there exists th the smallest noncrossing coarsening $\Phi(\pi)$ of $\pi$
(ie.\ if $\varrho$ is a noncrosing coarsening of $\pi$ then it is a coarsening of $\Phi(\pi)$).
We define $\|\pi\|=n-\#\Phi(\pi)$.  The notion for the map $\Phi$ was introduced in \cite{MR2423121}.

We say that $\pi$ is a \emph{pairpartition} if every member of $\pi$ has cardinality two.
The set of pairpartitions of $[2n]$ is denoted by $\P_2(2n)$.
Given $\pi\in\P_2(2n)$ we write $|\pi|$ to denote the number of ordered quadruples
$1\leq a<b<c<d\leq 2n$ such that both $\{a,c\}$ and $\{b,d\}$ belong to $\pi$.
Note, that $|\pi|=0$ precisely when $\pi$ is noncrossing.  The set of
noncrossing pairpartitions is denoted $\NC_2(2n)$.

\def\inner{\mathop{\hbox{\rm inn}}\nolimits}
Given a noncrossing pairpartition $\varpi$ we call $\{b,c\}\in\varpi$ \emph{an inner block} if there exists $\{a,d\}\in\varpi$
with $a<b<c<d$.  The number of inner blocks of $\varpi$ we denote as $\inner(\varpi)$.

In \cite[Cor.~7]{MR1388006} Bo\.zejko and Speicher observed the following identity.
\begin{equation}
\label{eq:p2-nc2}
\sum_{\pi\in\P_2(2n)}(-1)^{|\pi|}q^{\|\pi\|}=\sum_{\varpi\in\NC_2(2n)}(1-q)^{\inner(\varpi)}.
\end{equation}

Let $f_n(q)=\sum_{\varpi\in\NC_2(2n)}(1-q)^{\inner(\varpi)}$.  It is
elementary to derive
$$
f_n(q)=C_n\,\,\mathstrut_2\!F_1\left({n,1-n\atop n+2}\right|\left.\vphantom{n,1-n\atop n+2}q\right),
$$
where $C_n={2n\choose n}-{2n\choose n-1}=\#\NC_2(2n)$ denote the $n$-th Catalan number
and $\mathstrut_2\!F_1$ is the classical hypergeometric funcion.
If we write $f(q)=\sum_{j=0}^{n-1}t^n_jq^j$, then the triangle $(t^n_j)_{0\leq j<n}$
appears in \cite{oeisA062991} as ``sequence''
\textsc{\MakeLowercase{A062991}}).  Since we are not going to use this formula, we leave it as an exercise to the reader.
For the expansion of $f_n(1-t)$ and the Delanoy triangle appearing there the reader may consult \cite[Prop~6.1]{MR1863276}.


In what follows, we prove a result about an arbitrary finitely generated Coxeter group, which for permutation
groups implies the above one.
Given a permutation $\sigma\in S_n$, we construct a pairpartition $\overline\sigma=\{\{i,2n+1-\sigma(i)\}|1\leq i\leq n\}$.
Note, that $|\overline\sigma|$ is equal to the length of $\sigma$ with respect to the Coxeter generators $(1,2)$,
\dots, $(n-1,n)$ of $S_n$.  Therefore, denoting by $|w|$
the Coxeter length of an element $w$ of some Coxeter group $W$ will not lead into any confusion.

It is also clear that, with respect to the identification of permutations with
a subset of pairpartitions, the two definitions of $\|\cdot\|$ agree
(see Equations (\ref{def:block-length}) and (\ref{def:colour})).

By $W(t)$ we denote a growth series of a finitely generated Coxeter group $W$
(length function).  That is, a power series $W(t)=\sum_{w\in W}
t^{|w|}$.  (Note, that the coefficient at $t$ equals to $\#S$.  This
explains why here and in the rest of this section we consider only finitely
generated Coxeter groups.  We will not repeat this assumption for short.)
Moreover, for $X\subset W$ we write
$X(t)=\sum_{w\in X} t^{|w|}$.


Let us define a multivariable formal power series (\emph{chromatic length function}.
For any $X\subset W$ define
$$
X(t,\B q)=\sum_{w\in X}t^{|w|}\prod_{s\in S_w}q_s.
$$
In particular $X(t)=X(t,\B 1)$, where $\B 1=(1)_{s\in S}$.

\begin{proposition}\label{P:tokyo}
The polynomial (or formal power series, if\/ $W$ is infinite) $W(t,\B q)$ satisfies
$$W(t,\B q)=
\sum_{T\subset S}W_T(t)\prod_{r\in T}r_s\prod_{s\in S\smallsetminus T}(1-q_s).$$
\end{proposition}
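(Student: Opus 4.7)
The plan is to prove the identity by Möbius inversion on the Boolean lattice of subsets of $S$, together with the characterisation of parabolic subgroups in terms of the colour.

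\textbf{Step 1 (Parabolics in terms of colour).} By the characterisation (\ref{def:colour}), an element $w \in W$ lies in $W_T$ if and only if $S_w \subseteq T$. Grouping the elements of $W_T$ by their colour gives
$$
W_T(t) \;=\; \sum_{T' \subseteq T} g(T'), \qquad \text{where } g(T') \;:=\; \sum_{\substack{w \in W\\ S_w = T'}} t^{|w|}.
$$
Möbius inversion on the Boolean lattice $2^S$ then yields
$$
g(T) \;=\; \sum_{T' \subseteq T} (-1)^{|T \smallsetminus T'|}\, W_{T'}(t).
$$

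\textbf{Step 2 (Rewriting the chromatic length function).} Since every element $w \in W$ has a well-defined colour $S_w \subseteq S$, grouping by colour also yields
$$
W(t,\B q) \;=\; \sum_{T \subseteq S} g(T)\, \prod_{s \in T} q_s.
$$

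\textbf{Step 3 (Combining).} Substituting the formula from Step 1 into Step 2 and swapping the order of summation gives
$$
W(t,\B q) \;=\; \sum_{T' \subseteq S} W_{T'}(t) \sum_{T \supseteq T'} (-1)^{|T \smallsetminus T'|} \prod_{s \in T} q_s.
$$
Setting $U = T \smallsetminus T'$, the inner sum factors as
$$
\Bigl(\prod_{s \in T'} q_s\Bigr) \sum_{U \subseteq S \smallsetminus T'} (-1)^{|U|} \prod_{s \in U} q_s \;=\; \Bigl(\prod_{s \in T'} q_s\Bigr) \prod_{s \in S \smallsetminus T'} (1 - q_s),
$$
which is exactly the claimed form.

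\textbf{Main obstacle.} The only genuine subtlety is interpreting the possibly infinite product $\prod_{s \in S \smallsetminus T'}(1 - q_s)$ when $S$ is infinite. The cleanest viewpoint is to read the identity coefficient-wise in the formal power-series ring $\B Z[[t, (q_s)_{s \in S}]]$: fixing a monomial $t^n \prod_{s} q_s^{\epsilon_s}$ with finite support $E := \{s : \epsilon_s = 1\}$, the factor $\prod_{s \in T'} q_s \prod_{s \notin T'}(1-q_s)$ can contribute only when $T' \subseteq E$, so only finitely many $T'$ matter. This legitimises the rearrangement in Step 3.
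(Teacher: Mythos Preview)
Your proof is correct and follows essentially the same approach as the paper: your $g(T)$ is exactly the paper's $W^\circ_T(t)$ (the generating function for elements whose colour is precisely $T$), and both arguments proceed by M\"obius/inclusion--exclusion on the Boolean lattice of subsets of $S$, then swap the order of summation and factor the inner sum. Note that the paper explicitly restricts this section to finitely generated Coxeter groups, so your ``main obstacle'' concerning infinite $S$, while correctly handled, does not actually arise in the paper's setting.
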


\begin{proof}
Let\/ $W^\circ_{R}$ denote the set of all elements of\/ $W_R$ not
contained in any proper parabolic subgroup of\/ $W_R$, ie.
$W^\circ_{R}\colon=W_{R}-\bigcup_{T\varsubsetneq R}W_T$.
Then, by inclusion-exclusion principle, $W^\circ_R(t)=\sum_{T\subset R}(-1)^{\#(R-T)}W_T(t)$.
Therefore,
\begin{align*}
W(t,\B q)&=\sum_{w\in W}t^{|w|}\prod_{s\in S_w}q_s
=\sum_{R\subset S}W^\circ_{R}(t)\prod_{r\in R}q_r\\
&=\sum_{R\subset S}
\sum_{T\subset R}W_T(t)(-1)^{\#(R-T)}\prod_{r\in R}q_r\\
&=\sum_{T\subset S}
W_T(t)\prod_{r\in T}q_r\sum_{T\subset R\subset S}\prod_{s\in R\smallsetminus T}(-q_s)\\
&=\sum_{T\subset S}W_T(t)\prod_{r\in T}q_i\prod_{s\in S\smallsetminus T}(1-q_s).
\end{align*}
\end{proof}

\begin{corollary}\label{C:wat-1}
If\/ $W$ is a finite Coxeter group then
\begin{equation}
\label{eq:wat-1}
W(-1,\B q)=\prod\limits_{s\in S}(1-q_s).
\end{equation}
\end{corollary}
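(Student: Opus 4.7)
The plan is to substitute $t = -1$ directly into the formula from Proposition \ref{P:tokyo} and observe massive cancellation: almost every term in the sum over $T \subset S$ vanishes.

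First I would write down what Proposition \ref{P:tokyo} gives at $t = -1$:
$$
W(-1, \mathbf{q}) = \sum_{T \subset S} W_T(-1) \prod_{r \in T} q_r \prod_{s \in S \setminus T} (1 - q_s).
$$
The key observation is that $W_T(-1) = \sum_{w \in W_T} (-1)^{|w|}$, and the map $w \mapsto (-1)^{|w|}$ is precisely the sign character $\epsilon_T$ of the (finite) parabolic subgroup $W_T$. This is well-defined and multiplicative because any two reduced expressions for $w$ have the same length modulo 2 (in fact, equal length), and the Coxeter relations $(s_1 s_2)^{m(s_1,s_2)} = 1$ have even length on the left, so $\epsilon_T$ descends to a genuine homomorphism $W_T \to \{\pm 1\}$.

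Then I would invoke the standard orthogonality relation: summing a nontrivial character over a finite group gives zero. For every nonempty $T \subset S$, the subgroup $W_T$ is nontrivial (it contains each $s \in T$, which has order two and hence $\epsilon_T(s) = -1$), so $\epsilon_T$ is nontrivial and
$$
W_T(-1) = \sum_{w \in W_T}\epsilon_T(w) = 0.
$$
For $T = \varnothing$ we have $W_\varnothing = \{e\}$ and $W_\varnothing(-1) = 1$.

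Substituting back, only the $T = \varnothing$ term survives, producing exactly $\prod_{s \in S}(1 - q_s)$. I do not foresee any genuine obstacle here; the entire content is the disappearance of $W_T(-1)$ for nonempty $T$, which is the elementary character-orthogonality fact applied to the sign representation. The only thing to double-check is that the proposition's formula is valid at $t = -1$, which for a finite Coxeter group is automatic since $W(t, \mathbf{q})$ is a polynomial in $t$.
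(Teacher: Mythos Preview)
Your proof is correct and follows the same route as the paper: substitute $t=-1$ into Proposition~\ref{P:tokyo} and show that $W_T(-1)=0$ for every nonempty $T\subset S$, leaving only the $T=\varnothing$ term. The only cosmetic difference is in that one-line vanishing step---you invoke orthogonality of the sign character, whereas the paper factors $W_T(t)=(1+t)\,W_T^{\{s\}}(t)$ via the coset decomposition $W_T=W_T^{\{s\}}\cdot\{e,s\}$ for any $s\in T$; these are equivalent justifications.
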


\begin{proof}
Choose $s\in T$ and put $W_T^{\{s\}}=\{w\in W:|w|<|ws|\}$.  Clearly, $W_T=W_T^{\{s\}}W_T$
therefore $W_T(t)=W_T^{\{s\}}(t)W_{\{s\}}(t)$.
Since $W$ is a finite group, $W_T^{\{s\}}$ is a polynomial.
Thus $W_T(-1)=0$ if $T$ is nonempty (and $W_\varnothing(-1)=1$).
\end{proof}

In order to prove Equation (\ref{eq:p2-nc2}) we define the \emph{Wick map} $\P_2(2n)\ni\pi\mapsto\r{\pi}\in\NC_2(2n)$
(related to the normal order in quantum field theory).
Given a pairpartition $\pi$ we define $\r{\pi}$ by repetitive resolving crossings.
That is, we replace repetitively every crossing pair $\{a,c\}$ and $\{b,d\}$ with $a<b<c<d$ by
$\{a,d\}$ and $\{b,c\}$.  In order to see that the result is independent of the
order of resolution we describe $\r{\pi}$ in an equivalent way.

Let $\Phi(\pi)$ be the smallest noncrossing coarsening of $\pi$.  For each block $\beta$
of $\Phi(\pi)$ define $\beta^+=\{y|(\exists x)\ x\in\beta,\ y>x,\ \{x,y\}\in\pi\}$ and
$\beta_1^-=\{x|(\exists y)\ y\in\beta,\ y>x,\ \{x,y\}\in\pi\}$.  Order $\beta^+=\{y_1,\dots,y_k\}$
in increasing way and $\beta^-=\{x_1,\dots,x_k\}$ in decreasing way.  Then all
pairs $\{x_i,y_i\}$ will be parts of $\r{\pi}$.

Equation (\ref{eq:p2-nc2}) will follow from a more refined statement.

\begin{proposition}
\label{P:tokyo2}
For every $\varpi\in\NC_2(2n)$
\begin{equation}
\label{eq:tokyo2}
\sum_{\pi\in\P_2(2n)\atop \r{\pi}=\varpi}(-1)^{|\pi|}q^{\|\pi\|}=(1-q)^{\inner(\varpi)}.
\end{equation}
\end{proposition}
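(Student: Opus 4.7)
My approach is a cluster expansion. Observe first that the weight $(-1)^{|\pi|}q^{\|\pi\|}$ factorises multiplicatively over the blocks of $\Phi(\pi)$: crossings of $\pi$ are always internal to some $\Phi$-block, and $\|\pi\|=n-\#\Phi(\pi)=\sum_{\beta\in\Phi(\pi)}(|\beta|/2-1)$. Moreover, if $\r{\pi}=\varpi$ then $\Phi(\pi)$ must be a noncrossing coarsening of $\varpi$, and for every block $\beta$ one has $\Phi(\pi|_\beta)=\{\beta\}$ and $\r{\pi|_\beta}=\varpi|_\beta$. Regrouping the sum by the coarsening $\Phi=\Phi(\pi)$ gives
\[
F(\varpi):=\sum_{\pi:\r{\pi}=\varpi}(-1)^{|\pi|}q^{\|\pi\|}=\sum_{\Phi}\prod_{\beta\in\Phi}G(\varpi|_\beta),
\]
where $\Phi$ ranges over noncrossing coarsenings of $\varpi$ and $G(\tau):=q^{|\tau|-1}\sum_\sigma(-1)^{|\sigma|}$, with $\sigma$ summed over pairpartitions of $\mathrm{supp}(\tau)$ satisfying $\r{\sigma}=\tau$ and $\Phi(\sigma)=\{\mathrm{supp}(\tau)\}$.

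The key lemma is: $G(\tau)=0$ unless $\tau$ has a unique root pair (``is connected''), in which case $G(\tau)=(-q)^{|\tau|-1}$. The vanishing part is elementary: the spans of the roots of a disconnected $\tau$ form disjoint intervals $I_1,\ldots,I_k$, and any cross-span pair of $\sigma$ forces a cross-span pair in $\r{\sigma}$ via stack matching (absent from $\tau$), so $\sigma$ factors along the $I_j$'s and $\Phi(\sigma)$ cannot be a single block. For connected $\tau$ I would prove $G(\tau)=(-q)^{|\tau|-1}$ by secondary induction on $|\tau|$: the base $|\tau|=1$ is trivial, and for the step one peels the unique root $\{a,d\}$ of $\tau$, observes that every $\sigma$ contributing to $G(\tau)$ must have $\{a,d\}\notin\sigma$ (else $\{a,d\}$ would be an isolated $\Phi$-block), and recursively reduces to the connected subtrees hanging off $\{a,d\}$.

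Given the lemma, the only coarsenings $\Phi$ contributing to $F(\varpi)$ are those with every $\varpi|_\beta$ connected, and these are in natural bijection with subsets $T\subseteq\inner(\varpi)$: each inner block $c$ either remains as its own $\Phi$-block ($c\notin T$) or is merged with the $\Phi$-block of its parent ($c\in T$). For the resulting coarsening $\Phi_T$ one has $\#\Phi_T=|\varpi|-|T|$, so
\[
\prod_{\beta\in\Phi_T}G(\varpi|_\beta)=(-q)^{\sum_\beta(|\varpi|_\beta|-1)}=(-q)^{|\varpi|-\#\Phi_T}=(-q)^{|T|}.
\]
Summing over $T\subseteq\inner(\varpi)$ yields $F(\varpi)=\sum_T(-q)^{|T|}=(1-q)^{\inner(\varpi)}$, as required. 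The principal obstacle is the secondary induction establishing $G(\tau)=(-q)^{|\tau|-1}$ on connected $\tau$; there one must carefully control the signed enumeration of connected pairpartitions whose Wick reduction is $\tau$, which is where the combinatorics of stack matching interacts most intricately with the tree structure of $\tau$.
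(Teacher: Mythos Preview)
Your decomposition is exactly the paper's: group the sum by $\Phi(\pi)$, factor the weight multiplicatively over the blocks, observe that only ``admissible'' coarsenings (your ``connected on each block'') contribute, put these in bijection with subsets $T\subseteq\inner(\varpi)$, and sum $(-q)^{|T|}$. The admissibility condition and the final binomial sum match the paper line for line.

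The divergence is entirely in the block lemma
\[
\sum_{\sigma:\ \r{\sigma}=\tau,\ \Phi(\sigma)=\{\mathrm{supp}(\tau)\}}(-1)^{|\sigma|}=(-1)^{|\tau|-1}
\qquad(\tau\ \text{connected}),
\]
which is the substance of the whole proof. You propose to establish it by peeling the root $\{a,d\}$ and ``recursively reducing to the connected subtrees hanging off $\{a,d\}$'', but this recursion is not made precise, and you yourself flag it as the principal obstacle. Concretely: once $\{a,d\}\notin\sigma$, the pair through $a$ and the pair through $d$ may each link several of the subtrees at once, so there is no evident localisation to a single child of the root; one does not see how the inductive hypothesis is to be invoked.

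The paper bypasses this induction. It identifies, for $\tau=\overline 1_{[2k]}$, the fibre $\{\sigma:\r{\sigma}=\overline 1\}$ with $\mathfrak S_k$ via $\sigma=\overline\pi$, under which $|\sigma|$ and $\|\sigma\|$ become the Coxeter length and colour-length of $\pi$. The block lemma then reads off from the Coxeter identity of Corollary~\ref{C:wat-1}, $\mathfrak S_k(-1,\mathbf q)=\prod_s(1-q_s)$, by comparing the top coefficient in $q$; this in turn rests only on the elementary fact $W_T(-1)=0$ for nonempty $T$ in a finite Coxeter group. The general admissible block is handled by the same mechanism after factoring. So where you leave an open combinatorial induction, the paper closes the gap with the pre-established algebraic identity for symmetric groups.
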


\begin{proof}
Let us first consider the case of $\varpi=\overline1=\{(i,2n+1-i)|1\leq i\leq n\}$.
Clearly, $\{\pi|\r{\pi}=\overline1\}=\{\overline\sigma|\sigma\in\F S_n\}$.  And Equation
(\ref{eq:tokyo2}) is equivalent to Equation (\ref{eq:wat-1}) (as all $q_s$ are set to $q$) for
$W=\F S_n$.

In a general case observe, that $\Phi(\pi)$ is a coarsening of $\varpi=\r{\pi}$.  Yet, not every coarsening
may appear.  The obvious condition is that for each block $\beta$ of $\r{\pi}$ the pair $\{\min\beta,\max\beta\}$
belong to $\varpi$.  For the purpose of this proof we will call such a coarsening
admissible.  Its clear, that abmissible coarsenings $\rho$ are in one to one correspondence with
subsets of $\varpi$ containing all outer (not inner) parts of $\varpi$ of the form $\{\{\min\rho',\max\rho'\}|\rho'\in\rho\}$.

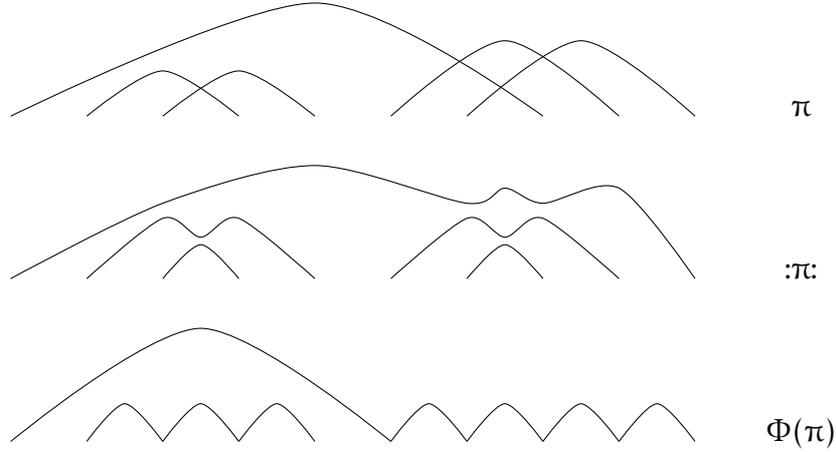
\begin{figure}
\centering
\begin{tikzpicture}
\matrix[matrix of nodes,column sep=0.8cm,row sep=0.5cm]{
\draw plot [smooth] coordinates {(1,0) (5,1.5) (8,0)};
\draw plot [smooth] coordinates {(2,0) (3,0.6) (4,0)};
\draw plot [smooth] coordinates {(3,0) (4,0.6) (5,0)};
\draw plot [smooth] coordinates {(6,0) (7.5,1) (9,0)};
\draw plot [smooth] coordinates {(7,0) (8.5,1.0) (10,0)};
&$\pi$\\
\draw plot [smooth] coordinates {(1,0) (3,1) (5,1.5) (7,1) (7.5,1.2) (8,1) (9,1.2) (10,0)};
\draw plot [smooth] coordinates {(2,0) (3,.8) (3.5,.55) (4,.8) (5,0)};
\draw plot [smooth] coordinates { (3,0) (3.5,0.45) (4,0)};
\draw plot [smooth] coordinates { (6,0) (7,0.8) (7.5,0.55) (8,0.8) (9,0)};
\draw plot [smooth] coordinates { (7,0) (7.5,.45) (8,0)};
&$\r{\pi}$\\
\draw plot [smooth] coordinates {(1,0) (3.5,1.5) (6,0)};
\draw plot [smooth] coordinates {(6,0) (6.5,.5) (7,0)};
\draw plot [smooth] coordinates {(7,0) (7.5,.5) (8,0)};
\draw plot [smooth] coordinates {(8,0) (8.5,.5) (9,0)};
\draw plot [smooth] coordinates {(9,0) (9.5,.5) (10,0)};
\draw plot [smooth] coordinates {(2,0) (2.5,.5) (3,0)};
\draw plot [smooth] coordinates {(3,0) (3.5,.5) (4,0)};
\draw plot [smooth] coordinates {(4,0) (4.5,.5) (5,0)};
&$\Phi(\pi)$\\};
\end{tikzpicture}
\caption*{\textsc{Figure.} Examples of $\pi$, $\r{\pi}$, and $\Phi(\pi)$.}
\end{figure}

Let us refine Equation (\ref{eq:tokyo2}) further.  For every $\varpi\in\NC_2(2n)$ and any
admissible coarsening $\eta$ of $\varpi$ we have
\begin{equation}
\label{eq:tokyo3}
\sum_{\pi\in\P_2(2n)\atop \r{\pi}=\varpi,\ \Phi(\pi)=\rho}(-1)^{|\pi|}=(-1)^{\#\rho}.
\end{equation}
Equation (\ref{eq:tokyo2}) follows from (\ref{eq:tokyo3}) by multiplying by $q^{n-\#\eta}$
and summing over all admissible coarsenings $\eta$ of $\varpi$.

Equation (\ref{eq:tokyo3}) is again equivalent to to Equation (\ref{eq:wat-1})
(for all apermutation groups and all $q_s$ set to $q$) as both
sides factor as a product over blocks of $\rho$.
\end{proof}

\begin{question}
We have proven Equation \ref{eq:p2-nc2} with the help of an embedding $\F S_n\ni\sigma\mapsto\overline\sigma\in\NC_2(2n)$
(or several such embeddings, one for each outer block of\/ $\varpi$).  Corollary
\ref{C:wat-1} holds for any Coxeter group.  Is there a corresponding formula
concerning some generalization of pairpartitions?
\end{question}

In the proof of Proposition \ref{P:tokyo} we have not assumed that $W$ was finite.
Let us finish this section with a discussion of infinite Coxeter groups.  Recall,
that $-1$ does not lie in the radius of convergence on $W(t)$ if $W$ is not finite.
Nevertheless, $W(t)$ represents a rational function as follows from the following result.
\begin{proposition}(\cite{MR0230728},\cite[Prop.~26]{MR0385006})\label{P:Serre}
Let $(W,S)$ be an an infinite Coxeter system. Then
\begin{equation}
\label{eq:serre}
\frac{1}{W(t)}=\sum_{T\in\C F}\frac{(-1)^{\#T}}{W_T(1/t)}.
\end{equation}
Where $\C F$ denote the family of subsets $T\subset S$, such that
the group $W_T$ generated by $T$ is finite.  In particular, $W(t)$ is a series
of a rational function (i.e. a quotient of polynomials).
\end{proposition}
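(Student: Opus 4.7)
The plan is to prove Serre's formula via a descent-set argument combined with Möbius inversion on the finite Boolean lattice of subsets of $S$.

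For $w\in W$, introduce the right descent set $D(w)=\{s\in S:|ws|<|w|\}$. The essential structural input is the classical fact that $D(w)$ always generates a finite parabolic subgroup, so $D(w)\in\C F$ for every $w\in W$; moreover, for any $T\in\C F$ with longest element $w_T$ of length $N_T$, the condition $D(w)\supseteq T$ is equivalent to a reduced factorization $w=v\,w_T$ with $v\in W^T:=\{v\in W:|vs|>|v|\text{ for all }s\in T\}$, the set of minimal-length coset representatives for $W/W_T$.

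Combined with the standard factorization $W(t)=W^T(t)\,W_T(t)$, arising from the unique reduced decomposition $w=vu$ with $v\in W^T$, $u\in W_T$, $|w|=|v|+|u|$, this yields
$$g_T(t):=\sum_{w:\,D(w)\supseteq T}t^{|w|}=t^{N_T}\,W^T(t)=\frac{t^{N_T}\,W(t)}{W_T(t)}\qquad(T\in\C F),$$
while $g_T\equiv 0$ for $T\notin\C F$. Möbius inversion applied to the evident identity
$$\sum_{w\in W:\,D(w)=\emptyset}t^{|w|}=1$$
(as $D(w)=\emptyset\Leftrightarrow w=e$) then delivers
$$1=\sum_{T\in\C F}(-1)^{\#T}\,t^{N_T}\,\frac{W(t)}{W_T(t)}.$$
Dividing by $W(t)$ and invoking the palindromic identity $W_T(t)=t^{N_T}\,W_T(1/t)$, a direct consequence of the bijection $u\mapsto w_Tu$ on $W_T$ with $|w_Tu|=N_T-|u|$, produces the Serre formula. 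Rationality of $W(t)$ follows once the right-hand side is a rational function of $t$, which it manifestly is since $\C F$ is finite and each $W_T$ (for $T\in\C F$) is a finite Coxeter group with polynomial Poincaré series.

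The principal obstacle is the non-elementary fact that $D(w)\in\C F$ for every $w\in W$, the finite descent-set theorem, whose standard proofs go through the Tits cone action or the strong exchange condition. Granted this input, the remaining ingredients—the unique reduced factorization $w=vu$, the palindromic symmetry of finite Coxeter Poincaré polynomials, and Möbius inversion on a finite Boolean lattice—are routine bookkeeping.
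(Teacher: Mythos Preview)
The paper does not actually supply a proof of this proposition: it is stated with references to Steinberg and Serre and then used as a black box. So there is no ``paper's own proof'' to compare against.

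Your argument is correct and is essentially the standard proof (the alternating-sum identity over descent sets going back to Solomon and Steinberg). The logic is sound: the finite descent-set theorem guarantees $g_T\equiv 0$ for $T\notin\C F$; the equivalence $D(w)\supseteq T\Longleftrightarrow w=v\,w_T$ with $v\in W^T$ is exactly right and gives $g_T(t)=t^{N_T}W(t)/W_T(t)$; M\"obius inversion on the Boolean lattice of subsets of the finite set $S$ is legitimate; and the palindromic identity $W_T(t)=t^{N_T}W_T(1/t)$ finishes the job. You are also right to flag the descent-set finiteness as the one genuinely non-elementary ingredient---in the paper's context this is \cite[Ch.~\textsc{iv}, \S1, Exercice~22]{MR0240238}, and everything else is indeed bookkeeping. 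One small point worth making explicit: the identity is first proved as an equality of formal power series (where $1/W(t)$ makes sense because the constant term is $1$), and rationality of $W(t)$ is then read off from the fact that the right-hand side is visibly a rational function of $t$.
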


One may ask a question what is the class of (infinite) Coxeter groups such that
$W_T(-1)=0$ for any nonempty subset $T$ of generators.
A~na\"ive argument that
$$W(t)=W_{\{s\}}(t)W^{\{s\}}(t)=(1+t)W^{\{s\}}(t)$$
shows, that the question if $W(-1)\neq0$ is equivalent to whether
$W^{\{s\}}(t)$ can have a pole at $t=-1$.
On the other hand note, that if $W$ is of type \textsc{\MakeLowercase{\~A}}$\mathstrut_2$, ie.~$W$
is given by a presentation $\langle s_i:1\leq i\leq3|s_i^2,(s_isj)^3:1\leq i<j\leq3\rangle$
then, by Equation (\ref{eq:serre}),  $W(t)=\frac{1+t+t^2}{(1-t)^2}$ and $W(-1)=\sfrac14$.

More generally, it is known (\cite{MR0240238}) that in each coset of $W_T$
there exists the unique shortest element.
Let $W^T$ denote the set of those shortest representatives.
Moreover if $w=w^Tw_T$ with $w^T\in W^T$ and $w_T\in W_T$ then
$|w|=|w^T|+|w_T|$. Therefore $W^T(t)W_T(t)=W(t).$
In particular, $W^T(t)$ represents a rational function,
and it is legitimate to ask about the value of $W^T(-1)$.

In the case of finite Coxeter group $W$, Eng \cite{MR1817702} observed that
$$W^T(-1)=\#\left\{w\in W^T|ww_0w\in W_T\right\},$$ where $w_0$ is the longest
element in $W$.
(Eng's proof was case-by-case. Later, a  general classification-free proof of Eng's theorem
was given in \cite{MR2087303}).

Subsequently, Reiner \cite{MR1923093} has shown that if $W$ is crystallographic
(ie. the Weyl group in a compact Lie group $G$),
then both sides of the above equality compute the signature of the corresponding
flag variety $G/Q_T$, where $Q_T$ is a parabolic subgroup associated to $T$.

What is the meaning of $W(-1)$ or $W^T(-1)$ for infinite $W$?

We do not know if it possible for $W(-1)$ to be negative.
If one takes $W=\langle s_i:1\leq i\leq 4|s_i^2, (s_is_j)^3:1\leq i<j\leq4\rangle$.
Then, by Equation (\ref{eq:serre}),  $W(t)=\frac{(1+t)(1+t+t^2)}{3t^3-2t^2-2t+1}$ and
$W^{s_i}(-1)=\sfrac{-1}{2}$.

\bibliography{tokyo}
\bibliographystyle{sc}

\section*{Acknowledgemens}

Marek Bo{\.z}ejko was supported by \textsc{\MakeLowercase{NCN MAESTRO}} grant \textsc{\MakeLowercase{DEC-2011/02/\discretionary{}{}{}A/ST1/00119}}.
Marek Bo{\.z}ejko and Wojciech M{\l}otkowski were supported by \textsc{\MakeLowercase{NCN}} grant \textsc{\MakeLowercase{2016/21/B/ST1/00628}}.

The authors would like to thank Ryszard Szwarc and Janusz Wysoczański for many discussions and help
with the preparation of this paper.

The first author would like to thank professor
F.~G\"otze for his kind invitation to \textsc{\MakeLowercase{SFB701}} and
his hospitality in Bielefeld in 2016.
\end{document}